\newtheorem{theorem}{Theorem}
\newtheorem{axiom}[theorem]{Axiom}
\newtheorem{conjecture}[theorem]{Conjecture}
\newtheorem{corollary}[theorem]{Corollary}
\newtheorem{definition}[theorem]{Definition}
\newtheorem{example}[theorem]{Example}
\newtheorem{exercise}[theorem]{Exercise}
\newtheorem{lemma}[theorem]{Lemma}
\newtheorem{proposition}[theorem]{Proposition}
\newtheorem{remark}[theorem]{Remark}
\newenvironment{proof}[1][Proof]{\noindent\textbf{#1.} }{\ \rule{0.5em}{0.5em}}
\let\pdfoutput=\undefined\fi
\chardef\@x10\chardef\@xv60
\def\tcitime{
\def\@time{%
  \@minute\time\@hour\@minute\divide\@hour\@xv
  \ifnum\@hour<\@x 0\fi\the\@hour:%
  \multiply\@hour\@xv\advance\@minute-\@hour
  \ifnum\@minute<\@x 0\fi\the\@minute
  }}%
\def\x@hyperref#1#2#3{%
   \catcode`\~ = 12
   \catcode`\$ = 12
   \catcode`\_ = 12
   \catcode`\# = 12
   \catcode`\& = 12
   \catcode`\% = 12
   \y@hyperref{#1}{#2}{#3}%
}
\def\y@hyperref#1#2#3#4{%
   #2\ref{#4}#3
   \catcode`\~ = 13
   \catcode`\$ = 3
   \catcode`\_ = 8
   \catcode`\# = 6
   \catcode`\& = 4
   \catcode`\% = 14
}
\def\QCTOpt[#1]#2{%
  \def\QCTOptB{#1}
  \def\QCTOptA{#2}
}
\def\QCTNOpt#1{%
  \def\QCTOptA{#1}
  \let\QCTOptB\empty
}
\def\Qct{%
  \@ifnextchar[{%
    \QCTOpt}{\QCTNOpt}
}
\def\QCBOpt[#1]#2{%
  \def\QCBOptB{#1}%
  \def\QCBOptA{#2}%
}
\def\QCBNOpt#1{%
  \def\QCBOptA{#1}%
  \let\QCBOptB\empty
}
\def\Qcb{%
  \@ifnextchar[{%
    \QCBOpt}{\QCBNOpt}%
}
\def\PrepCapArgs{%
  \ifx\QCBOptA\empty
    \ifx\QCTOptA\empty
      {}%
    \else
      \ifx\QCTOptB\empty
        {\QCTOptA}%
      \else
        [\QCTOptB]{\QCTOptA}%
      \fi
    \fi
  \else
    \ifx\QCBOptA\empty
      {}%
    \else
      \ifx\QCBOptB\empty
        {\QCBOptA}%
      \else
        [\QCBOptB]{\QCBOptA}%
      \fi
    \fi
  \fi
}
\def\GRAPHICSPS#1{%
 \ifcase\GRAPHICSTYPE
   \special{ps: #1}%
 \or
   \special{language "PS", include "#1"}%
 \fi
}%
\def\graffile#1#2#3#4{%
    \bgroup
	   \@inlabelfalse
       \leavevmode
       \@ifundefined{bbl@deactivate}{\def~{\string~}}{\activesoff}%
        \raise -#4 \BOXTHEFRAME{%
           \hbox to #2{\raise #3\hbox to #2{\null #1\hfil}}}%
    \egroup
}%
\def\draftbox#1#2#3#4{%
 \leavevmode\raise -#4 \hbox{%
  \frame{\rlap{\protect\tiny #1}\hbox to #2%
   {\vrule height#3 width\z@ depth\z@\hfil}%
  }%
 }%
}%
\let\nographics=\@msidraft
\newif\ifwasdraft
\def\GRAPHIC#1#2#3#4#5{%
   \ifnum\@msidraft=\@ne\draftbox{#2}{#3}{#4}{#5}%
   \else\graffile{#1}{#3}{#4}{#5}%
   \fi
}
\def\addtoLaTeXparams#1{%
    \edef\LaTeXparams{\LaTeXparams #1}}%
\newif\ifBoxFrame \BoxFramefalse
\newif\ifOverFrame \OverFramefalse
\newif\ifUnderFrame \UnderFramefalse
\def\BOXTHEFRAME#1{%
   \hbox{%
      \ifBoxFrame
         \frame{#1}%
      \else
         {#1}%
      \fi
   }%
}
\def\doFRAMEparams#1{\BoxFramefalse\OverFramefalse\UnderFramefalse\readFRAMEparams#1\end}%
\def\readFRAMEparams#1{%
 \ifx#1\end%
  \let\next=\relax
  \else
  \ifx#1i\dispkind=\z@\fi
  \ifx#1d\dispkind=\@ne\fi
  \ifx#1f\dispkind=\tw@\fi
  \ifx#1t\addtoLaTeXparams{t}\fi
  \ifx#1b\addtoLaTeXparams{b}\fi
  \ifx#1p\addtoLaTeXparams{p}\fi
  \ifx#1h\addtoLaTeXparams{h}\fi
  \ifx#1X\BoxFrametrue\fi
  \ifx#1O\OverFrametrue\fi
  \ifx#1U\UnderFrametrue\fi
  \ifx#1w
    \ifnum\@msidraft=1\wasdrafttrue\else\wasdraftfalse\fi
    \@msidraft=\@ne
  \fi
  \let\next=\readFRAMEparams
  \fi
 \next
 }%
\def\IFRAME#1#2#3#4#5#6{%
      \bgroup
      \let\QCTOptA\empty
      \let\QCTOptB\empty
      \let\QCBOptA\empty
      \let\QCBOptB\empty
      #6%
      \parindent=0pt
      \leftskip=0pt
      \rightskip=0pt
      \setbox0=\hbox{\QCBOptA}%
      \@tempdima=#1\relax
      \ifOverFrame
          \typeout{This is not implemented yet}%
          \show\HELP
      \else
         \ifdim\wd0>\@tempdima
            \advance\@tempdima by \@tempdima
            \ifdim\wd0 >\@tempdima
               \setbox1 =\vbox{%
                  \unskip\hbox to \@tempdima{\hfill\GRAPHIC{#5}{#4}{#1}{#2}{#3}\hfill}%
                  \unskip\hbox to \@tempdima{\parbox[b]{\@tempdima}{\QCBOptA}}%
               }%
               \wd1=\@tempdima
            \else
               \textwidth=\wd0
               \setbox1 =\vbox{%
                 \noindent\hbox to \wd0{\hfill\GRAPHIC{#5}{#4}{#1}{#2}{#3}\hfill}\\%
                 \noindent\hbox{\QCBOptA}%
               }%
               \wd1=\wd0
            \fi
         \else
            \ifdim\wd0>0pt
              \hsize=\@tempdima
              \setbox1=\vbox{%
                \unskip\GRAPHIC{#5}{#4}{#1}{#2}{0pt}%
                \break
                \unskip\hbox to \@tempdima{\hfill \QCBOptA\hfill}%
              }%
              \wd1=\@tempdima
           \else
              \hsize=\@tempdima
              \setbox1=\vbox{%
                \unskip\GRAPHIC{#5}{#4}{#1}{#2}{0pt}%
              }%
              \wd1=\@tempdima
           \fi
         \fi
         \@tempdimb=\ht1
         \advance\@tempdimb by -#2
         \advance\@tempdimb by #3
         \leavevmode
         \raise -\@tempdimb \hbox{\box1}%
      \fi
      \egroup%
}%
\def\DFRAME#1#2#3#4#5{%
  \vspace\topsep
  \hfil\break
  \bgroup
     \leftskip\@flushglue
	 \rightskip\@flushglue
	 \parindent\z@
	 \parfillskip\z@skip
     \let\QCTOptA\empty
     \let\QCTOptB\empty
     \let\QCBOptA\empty
     \let\QCBOptB\empty
	 \vbox\bgroup
        \ifOverFrame 
           #5\QCTOptA\par
        \fi
        \GRAPHIC{#4}{#3}{#1}{#2}{\z@}%
        \ifUnderFrame 
           \break#5\QCBOptA
        \fi
	 \egroup
  \egroup
  \vspace\topsep
  \break
}%
\def\FFRAME#1#2#3#4#5#6#7{%
  \@ifundefined{floatstyle}
    {
     \begin{figure}[#1]%
    }
    {
	 \ifx#1h
      \begin{figure}[H]%
	 \else
      \begin{figure}[#1]%
	 \fi
	}
  \let\QCTOptA\empty
  \let\QCTOptB\empty
  \let\QCBOptA\empty
  \let\QCBOptB\empty
  \ifOverFrame
    #4
    \ifx\QCTOptA\empty
    \else
      \ifx\QCTOptB\empty
        \caption{\QCTOptA}%
      \else
        \caption[\QCTOptB]{\QCTOptA}%
      \fi
    \fi
    \ifUnderFrame\else
      \label{#5}%
    \fi
  \else
    \UnderFrametrue%
  \fi
  \begin{center}\GRAPHIC{#7}{#6}{#2}{#3}{\z@}\end{center}%
  \ifUnderFrame
    #4
    \ifx\QCBOptA\empty
      \caption{}%
    \else
      \ifx\QCBOptB\empty
        \caption{\QCBOptA}%
      \else
        \caption[\QCBOptB]{\QCBOptA}%
      \fi
    \fi
    \label{#5}%
  \fi
  \end{figure}%
 }%
\def\makeactives{
  \catcode`\"=\active
  \catcode`\;=\active
  \catcode`\:=\active
  \catcode`\'=\active
  \catcode`\~=\active
}
   \gdef\activesoff{%
      \def"{\string"}%
      \def;{\string;}%
      \def:{\string:}%
      \def'{\string'}%
      \def~{\string~}%
    }
\def\FRAME#1#2#3#4#5#6#7#8{%
 \bgroup
 \ifnum\@msidraft=\@ne
   \wasdrafttrue
 \else
   \wasdraftfalse%
 \fi
 \def\LaTeXparams{}%
 \dispkind=\z@
 \def\LaTeXparams{}%
 \doFRAMEparams{#1}%
 \ifnum\dispkind=\z@\IFRAME{#2}{#3}{#4}{#7}{#8}{#5}\else
  \ifnum\dispkind=\@ne\DFRAME{#2}{#3}{#7}{#8}{#5}\else
   \ifnum\dispkind=\tw@
    \edef\@tempa{\noexpand\FFRAME{\LaTeXparams}}%
    \@tempa{#2}{#3}{#5}{#6}{#7}{#8}%
    \fi
   \fi
  \fi
  \ifwasdraft\@msidraft=1\else\@msidraft=0\fi{}%
  \egroup
 }%
\def\TEXUX#1{"texux"}
\def\func#1{\mathop{\rm #1}\nolimits}%
\long\def\QQQ#1#2{%
     \long\expandafter\def\csname#1\endcsname{#2}}%
\long\def\QQA#1#2{}%
\def\QTR#1#2{{\csname#1\endcsname {#2}}}%
\def\EXPAND#1[#2]#3{}%
\def\NOEXPAND#1[#2]#3{}%
\def\LaTeXparent#1{}%
\def\ChildStyles#1{}%
\def\ChildDefaults#1{}%
\def\QTagDef#1#2#3{}%
  \providecommand{\UNICODE}[2][]{\protect\rule{.1in}{.1in}}
  \providecommand{\U}[1]{\protect\rule{.1in}{.1in}}
\def\QQfnmark#1{\footnotemark}
 \def\abstract{%
  \if@twocolumn
   \section*{Abstract (Not appropriate in this style!)}%
   \else \small 
   \begin{center}{\bf Abstract\vspace{-.5em}\vspace{\z@}}\end{center}%
   \quotation 
   \fi
  }%
   \def\registered{\relax\ifmmode{}\r@gistered
                    \else$\m@th\r@gistered$\fi}%
 \def\r@gistered{^{\ooalign
  {\hfil\raise.07ex\hbox{$\scriptstyle\rm\text{R}$}\hfil\crcr
  \mathhexbox20D}}}}{}%
\newdimen\theight
\def\newfmtname{LaTeX2e}
  \DeclareOldFontCommand{\rm}{\normalfont\rmfamily}{\mathrm}
  \DeclareOldFontCommand{\sf}{\normalfont\sffamily}{\mathsf}
  \DeclareOldFontCommand{\tt}{\normalfont\ttfamily}{\mathtt}
  \DeclareOldFontCommand{\bf}{\normalfont\bfseries}{\mathbf}
  \DeclareOldFontCommand{\it}{\normalfont\itshape}{\mathit}
  \DeclareOldFontCommand{\sl}{\normalfont\slshape}{\@nomath\sl}
  \DeclareOldFontCommand{\sc}{\normalfont\scshape}{\@nomath\sc}
\def\alpha{{\Greekmath 010B}}%
\def\beta{{\Greekmath 010C}}%
\def\gamma{{\Greekmath 010D}}%
\def\delta{{\Greekmath 010E}}%
\def\epsilon{{\Greekmath 010F}}%
\def\zeta{{\Greekmath 0110}}%
\def\eta{{\Greekmath 0111}}%
\def\theta{{\Greekmath 0112}}%
\def\iota{{\Greekmath 0113}}%
\def\kappa{{\Greekmath 0114}}%
\def\lambda{{\Greekmath 0115}}%
\def\mu{{\Greekmath 0116}}%
\def\nu{{\Greekmath 0117}}%
\def\xi{{\Greekmath 0118}}%
\def\pi{{\Greekmath 0119}}%
\def\rho{{\Greekmath 011A}}%
\def\sigma{{\Greekmath 011B}}%
\def\tau{{\Greekmath 011C}}%
\def\upsilon{{\Greekmath 011D}}%
\def\phi{{\Greekmath 011E}}%
\def\chi{{\Greekmath 011F}}%
\def\psi{{\Greekmath 0120}}%
\def\omega{{\Greekmath 0121}}%
\def\varepsilon{{\Greekmath 0122}}%
\def\vartheta{{\Greekmath 0123}}%
\def\varpi{{\Greekmath 0124}}%
\def\varrho{{\Greekmath 0125}}%
\def\varsigma{{\Greekmath 0126}}%
\def\varphi{{\Greekmath 0127}}%
\def\nabla{{\Greekmath 0272}}
\def\FindBoldGroup{%
   {\setbox0=\hbox{$\mathbf{x\global\edef\theboldgroup{\the\mathgroup}}$}}%
}
\def\Greekmath#1#2#3#4{%
    \if@compatibility
        \ifnum\mathgroup=\symbold
           \mathchoice{\mbox{\boldmath$\displaystyle\mathchar"#1#2#3#4$}}%
                      {\mbox{\boldmath$\textstyle\mathchar"#1#2#3#4$}}%
                      {\mbox{\boldmath$\scriptstyle\mathchar"#1#2#3#4$}}%
                      {\mbox{\boldmath$\scriptscriptstyle\mathchar"#1#2#3#4$}}%
        \else
           \mathchar"#1#2#3#4%
        \fi 
    \else 
        \FindBoldGroup
        \ifnum\mathgroup=\theboldgroup 
           \mathchoice{\mbox{\boldmath$\displaystyle\mathchar"#1#2#3#4$}}%
                      {\mbox{\boldmath$\textstyle\mathchar"#1#2#3#4$}}%
                      {\mbox{\boldmath$\scriptstyle\mathchar"#1#2#3#4$}}%
                      {\mbox{\boldmath$\scriptscriptstyle\mathchar"#1#2#3#4$}}%
        \else
           \mathchar"#1#2#3#4%
        \fi     	    
	  \fi}
\newif\ifGreekBold  \GreekBoldfalse
\let\SAVEPBF=\pbf
\def\pbf{\GreekBoldtrue\SAVEPBF}%
  \newcounter{equationnumber}  
  \def\mathletters{%
     \addtocounter{equation}{1}
     \edef\@currentlabel{\theequation}%
     \setcounter{equationnumber}{\c@equation}
     \setcounter{equation}{0}%
     \edef\theequation{\@currentlabel\noexpand\alph{equation}}%
  }
    \def\BibTeX{{\rm B\kern-.05em{\sc i\kern-.025em b}\kern-.08em
                 T\kern-.1667em\lower.7ex\hbox{E}\kern-.125emX}}}{}%
\def\AmS{{\protect\usefont{OMS}{cmsy}{m}{n}%
                A\kern-.1667em\lower.5ex\hbox{M}\kern-.125emS}}}{}%
\def\@@eqncr{\let\@tempa\relax
    \ifcase\@eqcnt \def\@tempa{& & &}\or \def\@tempa{& &}%
      \else \def\@tempa{&}\fi
     \@tempa
     \if@eqnsw
        \iftag@
           \@taggnum
        \else
           \@eqnnum\stepcounter{equation}%
        \fi
     \fi
     \global\tag@false
     \global\@eqnswtrue
     \global\@eqcnt\z@\cr}
\def\TCItag{\@ifnextchar*{\@TCItagstar}{\@TCItag}}
\def\@TCItag#1{%
    \global\tag@true
    \global\def\@taggnum{(#1)}%
    \global\def\@currentlabel{#1}}
\def\@TCItagstar*#1{%
    \global\tag@true
    \global\def\@taggnum{#1}%
    \global\def\@currentlabel{#1}}
\def\tint{\msi@int\textstyle\int}%
\def\tiint{\msi@int\textstyle\iint}%
\def\tiiint{\msi@int\textstyle\iiint}%
\def\tiiiint{\msi@int\textstyle\iiiint}%
\def\tidotsint{\msi@int\textstyle\idotsint}%
\def\toint{\msi@int\textstyle\oint}%
\newtoks\temptoksa
\newtoks\temptoksb
\newtoks\temptoksc
\def\msi@int#1#2{%
 \def\@temp{{#1#2\the\temptoksc_{\the\temptoksa}^{\the\temptoksb}}}%
 \futurelet\@nextcs
 \@int
}
\def\@int{%
   \ifx\@nextcs\limits
      \typeout{Found limits}%
      \temptoksc={\limits}%
	  \let\@next\@intgobble%
   \else\ifx\@nextcs\nolimits
      \typeout{Found nolimits}%
      \temptoksc={\nolimits}%
	  \let\@next\@intgobble%
   \else
      \typeout{Did not find limits or no limits}%
      \temptoksc={}%
      \let\@next\msi@limits%
   \fi\fi
   \@next   
}%
\def\@intgobble#1{%
   \typeout{arg is #1}%
   \msi@limits
}
\def\msi@limits{%
   \temptoksa={}%
   \temptoksb={}%
   \@ifnextchar_{\@limitsa}{\@limitsb}%
}
\def\@limitsa_#1{%
   \temptoksa={#1}%
   \@ifnextchar^{\@limitsc}{\@temp}%
}
\def\@limitsb{%
   \@ifnextchar^{\@limitsc}{\@temp}%
}
\def\@limitsc^#1{%
   \temptoksb={#1}%
   \@ifnextchar_{\@limitsd}{\@temp}%
}
\def\@limitsd_#1{%
   \temptoksa={#1}%
   \@temp
}
\def\dint{\msi@int\displaystyle\int}%
\def\diint{\msi@int\displaystyle\iint}%
\def\diiint{\msi@int\displaystyle\iiint}%
\def\diiiint{\msi@int\displaystyle\iiiint}%
\def\didotsint{\msi@int\displaystyle\idotsint}%
\def\doint{\msi@int\displaystyle\oint}%
\def\ExitTCILatex{\makeatother }
\if@compatibility\message{amsmath already loaded}\fi\aftergroup\ExitTCILatex}
\if@compatibility\message{amstex already loaded}\fi\aftergroup\ExitTCILatex}
\if@compatibility\message{amsgen already loaded}\fi\aftergroup\ExitTCILatex}
\let\DOTSI\relax
\def\RIfM@{\relax\ifmmode}%
\def\FN@{\futurelet\next}%
\def\iint{\DOTSI\intno@\tw@\FN@\ints@}%
\def\iiint{\DOTSI\intno@\thr@@\FN@\ints@}%
\def\iiiint{\DOTSI\intno@4 \FN@\ints@}%
\def\idotsint{\DOTSI\intno@\z@\FN@\ints@}%
\def\ints@{\findlimits@\ints@@}%
\newif\iflimtoken@
\newif\iflimits@
\def\findlimits@{\limtoken@true\ifx\next\limits\limits@true
 \else\ifx\next\nolimits\limits@false\else
 \limtoken@false\ifx\ilimits@\nolimits\limits@false\else
 \ifinner\limits@false\else\limits@true\fi\fi\fi\fi}%
\def\multint@{\int\ifnum\intno@=\z@\intdots@                          
 \else\intkern@\fi                                                    
 \ifnum\intno@>\tw@\int\intkern@\fi                                   
 \ifnum\intno@>\thr@@\int\intkern@\fi                                 
 \int}
\def\multintlimits@{\intop\ifnum\intno@=\z@\intdots@\else\intkern@\fi
 \ifnum\intno@>\tw@\intop\intkern@\fi
 \ifnum\intno@>\thr@@\intop\intkern@\fi\intop}%
\def\intic@{%
    \mathchoice{\hskip.5em}{\hskip.4em}{\hskip.4em}{\hskip.4em}}%
\def\negintic@{\mathchoice
 {\hskip-.5em}{\hskip-.4em}{\hskip-.4em}{\hskip-.4em}}%
\def\ints@@{\iflimtoken@                                              
 \def\ints@@@{\iflimits@\negintic@
   \mathop{\intic@\multintlimits@}\limits                             
  \else\multint@\nolimits\fi                                          
  \eat@}
 \else                                                                
 \def\ints@@@{\iflimits@\negintic@
  \mathop{\intic@\multintlimits@}\limits\else
  \multint@\nolimits\fi}\fi\ints@@@}%
\def\intkern@{\mathchoice{\!\!\!}{\!\!}{\!\!}{\!\!}}%
\def\plaincdots@{\mathinner{\cdotp\cdotp\cdotp}}%
\def\intdots@{\mathchoice{\plaincdots@}%
 {{\cdotp}\mkern1.5mu{\cdotp}\mkern1.5mu{\cdotp}}%
 {{\cdotp}\mkern1mu{\cdotp}\mkern1mu{\cdotp}}%
 {{\cdotp}\mkern1mu{\cdotp}\mkern1mu{\cdotp}}}%
\def\RIfM@{\relax\protect\ifmmode}
\def\text{\RIfM@\expandafter\text@\else\expandafter\mbox\fi}
\let\nfss@text\text
\def\text@#1{\mathchoice
   {\textdef@\displaystyle\f@size{#1}}%
   {\textdef@\textstyle\tf@size{\firstchoice@false #1}}%
   {\textdef@\textstyle\sf@size{\firstchoice@false #1}}%
   {\textdef@\textstyle \ssf@size{\firstchoice@false #1}}%
   \glb@settings}
\def\textdef@#1#2#3{\hbox{{%
                    \everymath{#1}%
                    \let\f@size#2\selectfont
                    #3}}}
\newif\iffirstchoice@
\def\Let@{\relax\iffalse{\fi\let\\=\cr\iffalse}\fi}%
\def\vspace@{\def\vspace##1{\crcr\noalign{\vskip##1\relax}}}%
\def\multilimits@{\bgroup\vspace@\Let@
 \baselineskip\fontdimen10 \scriptfont\tw@
 \advance\baselineskip\fontdimen12 \scriptfont\tw@
 \lineskip\thr@@\fontdimen8 \scriptfont\thr@@
 \lineskiplimit\lineskip
 \vbox\bgroup\ialign\bgroup\hfil$\m@th\scriptstyle{##}$\hfil\crcr}%
\def\Sb{_\multilimits@}%
\def\endSb{\crcr\egroup\egroup\egroup}%
\def\Sp{^\multilimits@}%
\newdimen\ex@
\def\rightarrowfill@#1{$#1\m@th\mathord-\mkern-6mu\cleaders
 \hbox{$#1\mkern-2mu\mathord-\mkern-2mu$}\hfill
 \mkern-6mu\mathord\rightarrow$}%
\def\leftarrowfill@#1{$#1\m@th\mathord\leftarrow\mkern-6mu\cleaders
 \hbox{$#1\mkern-2mu\mathord-\mkern-2mu$}\hfill\mkern-6mu\mathord-$}%
\def\leftrightarrowfill@#1{$#1\m@th\mathord\leftarrow
\mkern-6mu\cleaders
 \hbox{$#1\mkern-2mu\mathord-\mkern-2mu$}\hfill
 \mkern-6mu\mathord\rightarrow$}%
\def\overrightarrow{\mathpalette\overrightarrow@}%
\def\overrightarrow@#1#2{\vbox{\ialign{##\crcr\rightarrowfill@#1\crcr
 \noalign{\kern-\ex@\nointerlineskip}$\m@th\hfil#1#2\hfil$\crcr}}}%
\def\overleftarrow{\mathpalette\overleftarrow@}%
\def\overleftarrow@#1#2{\vbox{\ialign{##\crcr\leftarrowfill@#1\crcr
 \noalign{\kern-\ex@\nointerlineskip}$\m@th\hfil#1#2\hfil$\crcr}}}%
\def\overleftrightarrow{\mathpalette\overleftrightarrow@}%
\def\overleftrightarrow@#1#2{\vbox{\ialign{##\crcr
   \leftrightarrowfill@#1\crcr
 \noalign{\kern-\ex@\nointerlineskip}$\m@th\hfil#1#2\hfil$\crcr}}}%
\def\underrightarrow{\mathpalette\underrightarrow@}%
\def\underrightarrow@#1#2{\vtop{\ialign{##\crcr$\m@th\hfil#1#2\hfil
  $\crcr\noalign{\nointerlineskip}\rightarrowfill@#1\crcr}}}%
\def\underleftarrow{\mathpalette\underleftarrow@}%
\def\underleftarrow@#1#2{\vtop{\ialign{##\crcr$\m@th\hfil#1#2\hfil
  $\crcr\noalign{\nointerlineskip}\leftarrowfill@#1\crcr}}}%
\def\underleftrightarrow{\mathpalette\underleftrightarrow@}%
\def\underleftrightarrow@#1#2{\vtop{\ialign{##\crcr$\m@th
  \hfil#1#2\hfil$\crcr
 \noalign{\nointerlineskip}\leftrightarrowfill@#1\crcr}}}%
\def\qopnamewl@#1{\mathop{\operator@font#1}\nlimits@}
\let\nlimits@\displaylimits
\def\setboxz@h{\setbox\z@\hbox}
\def\varlim@#1#2{\mathop{\vtop{\ialign{##\crcr
 \hfil$#1\m@th\operator@font lim$\hfil\crcr
 \noalign{\nointerlineskip}#2#1\crcr
 \noalign{\nointerlineskip\kern-\ex@}\crcr}}}}
 \def\rightarrowfill@#1{\m@th\setboxz@h{$#1-$}\ht\z@\z@
  $#1\copy\z@\mkern-6mu\cleaders
  \hbox{$#1\mkern-2mu\box\z@\mkern-2mu$}\hfill
  \mkern-6mu\mathord\rightarrow$}
\def\leftarrowfill@#1{\m@th\setboxz@h{$#1-$}\ht\z@\z@
  $#1\mathord\leftarrow\mkern-6mu\cleaders
  \hbox{$#1\mkern-2mu\copy\z@\mkern-2mu$}\hfill
  \mkern-6mu\box\z@$}
\def\projlim{\qopnamewl@{proj\,lim}}
\def\injlim{\qopnamewl@{inj\,lim}}
\def\varinjlim{\mathpalette\varlim@\rightarrowfill@}
\def\varprojlim{\mathpalette\varlim@\leftarrowfill@}
\def\varliminf{\mathpalette\varliminf@{}}
\def\varliminf@#1{\mathop{\underline{\vrule\@depth.2\ex@\@width\z@
   \hbox{$#1\m@th\operator@font lim$}}}}
\def\varlimsup{\mathpalette\varlimsup@{}}
\def\varlimsup@#1{\mathop{\overline
  {\hbox{$#1\m@th\operator@font lim$}}}}
\def\align{\@verbatim \frenchspacing\@vobeyspaces \@alignverbatim
You are using the "align" environment in a style in which it is not defined.}
\let\csname endalign*\endcsname =\endtrivlist
\def\alignat{\@verbatim \frenchspacing\@vobeyspaces \@alignatverbatim
You are using the "alignat" environment in a style in which it is not defined.}
\let\csname endalignat*\endcsname =\endtrivlist
\def\xalignat{\@verbatim \frenchspacing\@vobeyspaces \@xalignatverbatim
You are using the "xalignat" environment in a style in which it is not defined.}
\let\csname endxalignat*\endcsname =\endtrivlist
\def\gather{\@verbatim \frenchspacing\@vobeyspaces \@gatherverbatim
You are using the "gather" environment in a style in which it is not defined.}
\let\csname endgather*\endcsname =\endtrivlist
\def\multiline{\@verbatim \frenchspacing\@vobeyspaces \@multilineverbatim
You are using the "multiline" environment in a style in which it is not defined.}
\let\csname endmultiline*\endcsname =\endtrivlist
\def\arrax{\@verbatim \frenchspacing\@vobeyspaces \@arraxverbatim
You are using a type of "array" construct that is only allowed in AmS-LaTeX.}
\def\tabulax{\@verbatim \frenchspacing\@vobeyspaces \@tabulaxverbatim
You are using a type of "tabular" construct that is only allowed in AmS-LaTeX.}
\let\csname endarrax*\endcsname =\endtrivlist
\let\csname endtabulax*\endcsname =\endtrivlist
 \def\endequation{%
     \ifmmode\ifinner 
      \iftag@
        \addtocounter{equation}{-1} 
        $\hfil
           \displaywidth\linewidth\@taggnum\egroup \endtrivlist
        \global\tag@false
        \global\@ignoretrue   
      \else
        $\hfil
           \displaywidth\linewidth\@eqnnum\egroup \endtrivlist
        \global\tag@false
        \global\@ignoretrue 
      \fi
     \else   
      \iftag@
        \addtocounter{equation}{-1} 
        \eqno \hbox{\@taggnum}
        \global\tag@false%
        $$\global\@ignoretrue
      \else
        \eqno \hbox{\@eqnnum}
        $$\global\@ignoretrue
      \fi
     \fi\fi
 } 
 \newif\iftag@ \tag@false
 \def\TCItag{\@ifnextchar*{\@TCItagstar}{\@TCItag}}
 \def\@TCItag#1{%
     \global\tag@true
     \global\def\@taggnum{(#1)}%
     \global\def\@currentlabel{#1}}
 \def\@TCItagstar*#1{%
     \global\tag@true
     \global\def\@taggnum{#1}%
     \global\def\@currentlabel{#1}}
     \def\tag{\@ifnextchar*{\@tagstar}{\@tag}}
     \def\@tag#1{%
         \global\tag@true
         \global\def\@taggnum{(#1)}}
     \def\@tagstar*#1{%
         \global\tag@true
         \global\def\@taggnum{#1}}
\def\dfrac#1#2{{\displaystyle {#1 \over #2}}}%
\begin{document}

\title{A convergence theorem for harmonic measures with applications to
Taylor series}
\date{}
\author{Stephen J. Gardiner and Myrto Manolaki}
\maketitle

\begin{abstract}
Let $f$ be a holomorphic function on the unit disc, and $(S_{n_{k}})$ be a
subsequence of its Taylor polynomials about $0$. It is shown that the
nontangential limit of $f$ and lim$_{k\rightarrow \infty }S_{n_{k}}$ agree
at almost all points of the unit circle where they simultaneously exist.
This result yields new information about the\ boundary behaviour of
universal Taylor series. The key to its proof lies in a convergence theorem
for harmonic measures that is of independent interest.
\end{abstract}

\section{Introduction}

\footnotetext{%
\noindent 2010 \textit{Mathematics Subject Classification } 30B30, 30C85,
30K05, 31A15, 31B20.}Let $f$ be a holomorphic function on the unit disc $%
\mathbb{D}$. We assume that its Taylor series about $0$ has radius of
convergence $1$ and denote by $S_{n}$ the partial sum of this series up to
degree $n$. It is natural to ask how the boundary behaviour of $f$ at a
subset $A$ of the unit circle $\mathbb{T}$ constrains the functions on $A$
that can arise as $\lim_{k\rightarrow \infty }S_{n_{k}}$ for some
subsequence $(S_{n_{k}})$ of $(S_{n})$.

It turns out that even in the simplest situation, where $f$ is holomorphic
on $\mathbb{C}\backslash \{1\}$, the sequence $(S_{n})$ typically has
chaotic behaviour on Dirichlet subsets of $\mathbb{T}$, that is, compact
sets on which $(z^{n_{k}})$ converges uniformly to $1$ for some subsequence $%
(n_{k})$ of the natural numbers. More precisely, Beise, Meyrath and M\"{u}%
ller \cite{BMM} have shown recently that, given any Dirichlet set $A\subset 
\mathbb{T}\backslash \{1\}$, there is a residual subset of the space of
holomorphic functions on $\mathbb{C}\backslash \{1\}$ (endowed with the
topology of local uniform convergence), each member $f$ of which has the
properties that:

\begin{enumerate}
\item[(i)] for each continuous function $h$ on $A$ there is a subsequence $%
(S_{n_{k}})$ that converges uniformly to $h$ on $A$;

\item[(ii)] there is a subsequence $(S_{m_{k}})$ that converges locally
uniformly to $f$ on $\mathbb{T}\backslash \{1\}$.
\end{enumerate}

Dirichlet sets $A$ can have Hausdorff dimension $1$ but cannot have positive
arc length measure $\sigma (A)$ (see, for example, p.171 of \cite{Kah}).
This leaves open the question of whether property (i) above can occur on
subsets $A\subset \mathbb{T}$\ of positive measure. We show below that this
cannot happen, even where the boundary values of $f$ exist merely as
nontangential limits. Let $\mathrm{nt}\lim_{z\rightarrow \zeta }f(z)$ denote
the nontangential limit of $f$ at a point $\zeta \in \mathbb{T}$, wherever
it exists (finitely).

\begin{theorem}
\label{main}Given a holomorphic function $f$ on $\mathbb{D}$ and a
subsequence $(S_{n_{k}})$ of the partial sums of its Taylor series about $0$%
, let%
\begin{equation*}
E=\{\zeta \in \mathbb{T}:S(\zeta ):=\lim_{k\rightarrow \infty
}S_{n_{k}}(\zeta )\text{ exists}\}
\end{equation*}%
and%
\begin{equation*}
F=\{\zeta \in \mathbb{T}:f(\zeta ):=\mathrm{nt}\lim_{z\rightarrow \zeta }f(z)%
\text{ exists}\}.
\end{equation*}%
Then $S=f$ almost everywhere $(\sigma )$ on $E\cap F$.
\end{theorem}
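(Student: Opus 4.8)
The plan is to argue by contradiction, passing first to a regular compact subset of $E\cap F$ and then exploiting a sawtooth domain built over it, so that everything reduces to a harmonic‑measure identity.

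Suppose $\sigma\big(\{\zeta\in E\cap F:S(\zeta)\neq f(\zeta)\}\big)>0$. I would first carry out the usual measure–theoretic regularizations: by inner regularity, Egorov's theorem (for $S_{n_k}\to S$), an Egorov‑type theorem for nontangential oscillation (to make the boundary values of $f$ attained uniformly along fixed truncated Stolz angles $\Gamma_{\alpha}(\zeta)$), and Lusin's theorem, I obtain a compact set $K\subseteq E\cap F$ with $\sigma(K)>0$ on which $S_{n_k}\to S$ uniformly, $S$ and the boundary function $f$ are continuous, and $f(z)\to f(\zeta)$ uniformly as $\Gamma_{\alpha}(\zeta)\ni z\to\zeta$ over $\zeta\in K$; after multiplying $f$ and every $S_n$ by a fixed unimodular constant and shrinking $K$ once more (restricting to where $|S-f|$ is bounded below and $\arg(S-f)$ lies in a short arc), I may also assume $\operatorname{Re}\big(S(\zeta)-f(\zeta)\big)\ge\varepsilon>0$ on $K$.

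Next I would introduce the sawtooth domain $\Omega=\bigcup_{\zeta\in K}\Gamma_{\alpha}(\zeta)\subset\mathbb{D}$. Its boundary splits as $\partial\Omega=K\cup\gamma$, where $\gamma$, the union of the ``teeth'', is a countable union of segments contained in $\mathbb{D}$ with $\overline{\gamma}\cap\mathbb{T}\subseteq K$; the uniform nontangential convergence on $K$ makes $f$ extend continuously to $\overline{\Omega}$ with $f|_K$ the nontangential boundary function. Fix $z_0\in\Omega$ and put $\omega=\omega_{\Omega}(z_0,\cdot)$. Since $\sigma(K)>0$ and $\partial\Omega$ is rectifiable, $\omega(K)>0$ (F. and M. Riesz theorem, or a direct barrier near a density point of $K$). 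As $S_{n_k}$ and $f$ are holomorphic in $\Omega$ and continuous on $\overline{\Omega}$, the representation $u(z_0)=\int_{\partial\Omega}u\,d\omega$ applied to $S_{n_k}$ and $f$ gives
\[
S_{n_k}(z_0)-f(z_0)=\int_K\big(S_{n_k}-f\big)\,d\omega+\int_{\gamma}\big(S_{n_k}-f\big)\,d\omega .
\]
Here the left‑hand side tends to $0$ because the Taylor partial sums converge to $f$ locally uniformly on $\mathbb{D}$, and, by the uniform convergence on $K$, the first integral tends to $\int_K(S-f)\,d\omega$, whose real part is at least $\varepsilon\,\omega(K)>0$. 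Thus the whole proof comes down to proving that $\int_{\gamma}\big(S_{n_k}-f\big)\,d\omega\to0$.

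This is the crux, and it is precisely here that the convergence theorem for harmonic measures enters. On each fixed compact subset of $\mathbb{D}$, hence on $\gamma\cap\{|z|\le r\}$ for $r<1$, one has $S_{n_k}\to f$ uniformly, so that part of the integral is harmless; and $\omega\big(\gamma\cap\{|z|>r\}\big)\to0$ as $r\uparrow1$, since $\gamma\subset\mathbb{D}$ and these sets decrease to the empty set. The obstacle is that near the tips of the teeth --- which lie on $K$ --- the partial sums converge to $f$ only pointwise and may fail to be uniformly bounded, so dominated convergence is unavailable and mass could a priori escape. I expect this to be overcome by working with a sequence of truncated sawtooth domains $\Omega_j\uparrow\Omega$ on whose boundaries $S_{n_k}$ is controllable, and by invoking the convergence of the harmonic measures $\omega_{\Omega_j}(z_0,\cdot)$ to $\omega$: the quantitative form of that convergence is exactly what is needed to show that the contribution of the part of $\gamma$ close to $\mathbb{T}$ is negligible, yielding the desired contradiction and hence $S=f$ almost everywhere on $E\cap F$.
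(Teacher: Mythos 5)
Your reduction is sound up to and including the identity
\begin{equation*}
S_{n_k}(z_0)-f(z_0)=\int_K\bigl(S_{n_k}-f\bigr)\,d\omega+\int_{\gamma}\bigl(S_{n_k}-f\bigr)\,d\omega ,
\end{equation*}
and you correctly identify the obstacle: the integral over the ``teeth'' $\gamma$. But the final paragraph, where all the difficulty lies, is not a proof; it is a hope, and as stated the hope does not pan out. Exhausting by truncated sawtooth domains $\Omega_j\uparrow\Omega$ and using that $\omega_{\Omega_j}(z_0,\cdot)$ converges to $\omega$ gets you nothing new, because the boundary of each $\Omega_j$ lies in a compact subset of $\mathbb{D}$; passing $k\to\infty$ there simply recovers the representation of $f$ on $\Omega_j$ and never engages the boundary data $S$ on $K$. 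Conversely, on the full sawtooth domain no weak-$\ast$ convergence of harmonic measures can control $\int_{\gamma\cap\{|z|>r\}}(S_{n_k}-f)\,d\omega$ uniformly in $k$, because the partial sums $S_{n_k}$ can be of size $e^{cn_k}$ near $\mathbb{T}$, so the integrand is not dominated by any harmonic-measure-integrable majorant independent of $k$. This is a genuine gap, and it is precisely the one that the convergence theorem for harmonic measures is designed to fill, though not in the way you describe.

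What the paper actually does at this point is quite different. Instead of trying to kill the bad part of $\gamma$ by exhaustion, it first invokes Bernstein's lemma on $\Omega=\widehat{\mathbb{C}}\setminus K$ to get the quantitative growth bound $|S_{n_k}|\le a\,e^{n_k G_\Omega(\infty,\cdot)}$, forms the rescaled subharmonic functions $u_k=\tfrac1{n_k}\log\tfrac{|S_{n_k}-f|}{2a}$, majorizes them by a decreasing sequence $(v_k)$ of harmonic functions on the sawtooth with $\lim v_k<0$, and then applies Theorem~\ref{tool} with this $(v_k)$. The conclusion is the existence of a subdomain $\omega_1=\{v_{k'}<0\}$ of the sawtooth that still has $\mu_w^{\omega_1}(K)>0$, and on which $|S_{n_k}-f|\le 2a$ for all $k\ge k'$. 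The dominated-convergence argument you want to run is then carried out on $\omega_1$, where it is legitimate, and the contradiction $0=\operatorname{Re}H_{\phi-f}^{\omega_1}(w)\ge\inf_K\operatorname{Re}(S-f)\,\mu_w^{\omega_1}(K)>0$ emerges. So the decisive step is not a limit of harmonic measures of exhausting domains, but the production, via Theorem~\ref{tool}, of a single smaller domain on which the partial sums are uniformly bounded while still charging $K$. Without something of that kind, your sketch cannot be completed.

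One smaller remark: the Egorov/Lusin regularization you propose to make $f$ continuous on $\overline{\Omega}$ is more than is needed and slightly delicate to justify for nontangential oscillation; the paper sidesteps it by using a conformal map $g:\mathbb{D}\to\omega$, Carath\'eodory's theorem, $g'\in H^1$ and the F.\ and M.\ Riesz theorem to obtain directly that $f=H_f^\omega$ on the sawtooth. Either route can serve for the representation of $f$; the real missing idea is the one discussed above.
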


A classical result in this area is Abel's Limit Theorem, which says that, if 
$(S_{n}(\zeta ))$ converges for some $\zeta \in \mathbb{T}$, then $\mathrm{nt%
}\lim_{z\rightarrow \zeta }f(z)$ exists, and the two limits agree. If we
merely know that a subsequence $(S_{n_{k}}(\zeta ))$ converges, no
conclusion about the boundary behaviour of $f$ at $\zeta $ may be drawn.
Indeed, for a typical holomorphic function $f$ on $\mathbb{D}$, any
continuous function on $\mathbb{T}$ is the pointwise limit of a suitable
subsequence $(S_{n_{k}})$. (See the properties of the collection $\mathcal{U}%
_{0}(\mathbb{D},0)$ noted below.) Nevertheless, Theorem \ref{main} still
shows that $\lim_{k\rightarrow \infty }S_{n_{k}}(\zeta )$ and $\mathrm{nt}%
\lim_{z\rightarrow \zeta }f(z)$ must agree almost everywhere on the set
where they simultaneously exist.

Theorem \ref{main} fails if we replace nontangential limits by radial
limits. To see this, let $F$ be a closed nowhere dense subset of $\mathbb{T}$%
\ such that $\sigma (F)>0$. Then, by Theorem 1.2 of Costakis \cite{Co},
there is a holomorphic function $f$ on $\mathbb{D}$ which has radial limit $%
0 $ at each point of $F$ and such that some subsequence $(S_{n_{k}})$\
converges pointwise to $1$ on $\mathbb{T}$.

Now let $f$ be a holomorphic function on a proper subdomain $\omega $ of $%
\mathbb{C}$, let $\xi \in \omega $, $r_{0}=\mathrm{dist}(\xi ,\mathbb{C}%
\backslash \omega )$ and $D_{0}$ denote the open disc $D(\xi ,r_{0})$ of
centre $\xi $ and radius $r_{0}$. Further, let $S_{n}(f,\xi )$ denote the
partial sum up to degree $n$ of the Taylor series of $f$ about $\xi $.
Following Nestoridis \cite{Ne} we call this series \textit{universal}, and
write $f\in \mathcal{U}(\omega ,\xi )$, if for every compact set $K\subset 
\mathbb{C}\backslash \omega $ that has connected complement, and every
continuous function $h$ on $K$ that is holomorphic on $K^{\circ }$, there is
a subsequence $(S_{n_{k}}(f,\xi ))$ that converges uniformly to $h$ on $K$.
Similarly, we write $f\in \mathcal{U}_{0}(\omega ,\xi )$ if $f$ satisfies
the corresponding condition in which we only consider compact sets $K\subset
\partial D_{0}\backslash \omega $. Clearly $\mathcal{U}(\omega ,\xi )\subset 
\mathcal{U}_{0}(\omega ,\xi )$, with equality if $\mathbb{C}\backslash
\omega \subset \partial D_{0}$. Nestoridis and Papachristodoulos \cite{NP}
have shown that $\mathcal{U}_{0}(\omega ,\xi )$ is a dense $G_{\delta }$
subset of the space of all holomorphic functions on $\omega $. They further
observed that, if $f\in \mathcal{U}_{0}(\omega ,\xi )$ and $\partial
D_{0}\backslash \omega $ contains a nondegenerate arc, then $f$ does not
extend continuously to $\omega \cup \partial D_{0}$. We can now give:

\begin{corollary}
\label{Cor}Let $f\in \mathcal{U}_{0}(\omega ,\xi )$ and suppose that $\sigma
(\partial D_{0}\backslash \omega )>0$. Then, for $\sigma $-almost every $%
\zeta \in \partial D_{0}\backslash \omega $, the set $f(\Gamma )$ is dense
in $\mathbb{C}$ for every open triangle $\Gamma \subset D_{0}$ which has a
vertex at $\zeta $ and is symmetric about $[0,\zeta ]$.
\end{corollary}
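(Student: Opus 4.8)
The plan is to deduce the corollary from Theorem \ref{main} by a localisation and reduction argument. First I would pass to the disc $D_0 = D(\xi, r_0)$. After an affine change of variable that sends $D_0$ to $\mathbb{D}$ and $\xi$ to $0$, the Taylor series of $f$ about $\xi$ becomes the Taylor series about $0$ of a holomorphic function on $\mathbb{D}$ (with radius of convergence exactly $r_0$, hence $1$ after rescaling, since $f$ does not extend holomorphically past $D_0$ in general — though even if it does, the argument below still applies). So without loss of generality we work on $\mathbb{D}$, with $\xi = 0$, and the condition $f \in \mathcal{U}_0(\omega,\xi)$ gives: for every compact $K \subset \mathbb{T} \setminus \omega$ with connected complement and every $h \in C(K)$ holomorphic on $K^\circ$, some subsequence of the Taylor partial sums converges uniformly to $h$ on $K$.

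The heart of the matter is the following dichotomy at a point $\zeta \in \mathbb{T}\setminus\omega$. Suppose, for contradiction, that for a set of $\zeta$ of positive $\sigma$-measure there is an open triangle $\Gamma \subset \mathbb{D}$ with vertex $\zeta$, symmetric about $[0,\zeta]$, such that $f(\Gamma)$ is \emph{not} dense in $\mathbb{C}$. Since the triangles with vertex $\zeta$ symmetric about $[0,\zeta]$ are cofinal (under inclusion) as the aperture shrinks, and there are only countably many ``rational'' apertures, a standard measure-theoretic pigeonholing lets me fix a single aperture $\alpha$ and a positive-measure set $A \subset \mathbb{T}\setminus\omega$ such that for every $\zeta \in A$, $f$ on the Stolz-type triangle $\Gamma_\alpha(\zeta)$ omits some open disc; refining further (again countably many rational discs), I may assume all these triangles omit one fixed open disc $D(w_0,\rho)$. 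Then $1/(f - w_0)$ is a bounded holomorphic function on the union of these triangles. By Lindelöf's theorem / the Fatou-type theory of bounded holomorphic functions on a Stolz domain, $f$ has a finite nontangential limit at $\sigma$-a.e.\ point of $A$; that is, $\sigma(A \cap F) = \sigma(A) > 0$ in the notation of Theorem \ref{main}. (Here one uses that a bounded holomorphic function on a triangle with vertex $\zeta$ has a nontangential limit at $\zeta$ through a slightly narrower cone, and then a standard argument upgrades pointwise-a.e.\ existence on $A$.)

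Now I bring in the universality. Because $A \subset \mathbb{T}\setminus\omega$ has positive measure, it contains a compact subset $K$ of positive measure; after shrinking, $K$ has connected complement (remove a slit, or simply note $\mathbb{C}\setminus K$ is connected for $K \subset \mathbb{T}$ a compact set — indeed $\mathbb{C}\setminus\mathbb{T}$ argument: any compact subset of $\mathbb{T}$ automatically has connected complement). Choosing the target function $h \equiv c$ for a constant $c$ with $c \neq \mathrm{nt}\lim_{z\to\zeta} f(z)$ on a further positive-measure subset (possible since the nontangential-limit function is measurable, so on a positive-measure set it avoids some fixed value $c$), universality supplies a subsequence $(S_{n_k})$ of the Taylor partial sums converging uniformly on $K$ to the constant $c$. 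Then $S(\zeta) := \lim_k S_{n_k}(\zeta) = c$ exists for every $\zeta \in K$, so $K \subset E$, and hence $K \subset E \cap F$ with $\sigma(K)>0$. But Theorem \ref{main} forces $S = f$ a.e.\ on $E \cap F$, i.e.\ $c = \mathrm{nt}\lim f$ a.e.\ on $K$ — contradicting the choice of $c$. This contradiction shows the exceptional set has measure zero, which is exactly the assertion of the corollary.

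The main obstacle I anticipate is the first reduction: turning ``$f(\Gamma)$ not dense'' (a statement about the \emph{image}) into ``$f$ has a nontangential limit'' (a statement about \emph{boundary values}), uniformly over a positive-measure set of vertices. The cleanest route is: omitting an open disc makes $1/(f-w_0)$ bounded on the relevant triangle, a bounded holomorphic function on an open triangle with vertex $\zeta$ has a nontangential limit at $\zeta$ (Lindelöf), hence so does $f$ (the limit being finite or possibly $\infty$; but $f$ is already defined and finite, and if $1/(f-w_0) \to 0$ then we would need to check this does not happen on a positive-measure set — it cannot, because then $f$ would tend to $\infty$ nontangentially on a positive-measure set, contradicting that $f \in H(\mathbb{D})$ together with, e.g., the fact that $\log|f|$ being in a suitable class prevents radial limits $\infty$ on positive measure). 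Making the countable pigeonholing (aperture, omitted disc, avoided constant $c$) rigorous and checking the measurability of the map $\zeta \mapsto \mathrm{nt}\lim_{z\to\zeta} f(z)$ on $F$ are routine but must be done carefully.
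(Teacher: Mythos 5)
Your proposal is correct and takes essentially the same route as the paper: the key reduction in your first paragraph (from ``$f(\Gamma)$ not dense for some $\Gamma$'' to ``$f$ has a finite nontangential limit at $\sigma$-a.e.\ such $\zeta$'') is exactly the content of Plessner's theorem, which the paper simply cites (Theorem 2.5 of \cite{GM}) rather than re-deriving via the omitted-disc, bounded-function, local-Fatou/Privalov argument that you sketch. Citing Plessner directly would let you bypass the pigeonholing over apertures and omitted discs and the worry about $1/(f-w_0)\to 0$ on a positive-measure set; the remainder of your argument (compact $K$ of positive measure, universality giving $S_{n_k}\to c$, contradiction with $S=f$ a.e.\ on $E\cap F$) matches what the paper leaves to the reader.
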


This follows immediately from Theorem \ref{main}, because Plessner's theorem
(Theorem 2.5 of \cite{GM}) tells us that at $\sigma $-almost every point of $%
\partial D_{0}\backslash \omega $ either $f$ has a finite nontangential
limit or $f(\Gamma )$ is dense in $\mathbb{C}$ for every such triangle $%
\Gamma $. The special case of this corollary where $\omega =D_{0}$ was
recently established in \cite{GAIF}. (It was stated there for $f\in \mathcal{%
U}(\omega ,\xi )$, but the proof is valid also for $f\in \mathcal{U}%
_{0}(\omega ,\xi )$.)

Theorem 1 of \cite{GaMa} tells us that, if $\zeta \in \partial
D_{0}\backslash \omega $ and a function $f$ in $\mathcal{U}(\omega ,\xi )$
is bounded in $D(\zeta ,\rho )\cap \omega $ for some $\rho >0$, then $%
\mathbb{C}\backslash (\omega \cup \overline{D}_{0})$ must be polar.
Corollary \ref{Cor} yields the additional information that $(\partial
D_{0}\backslash \omega )\cap D(\zeta ,\rho )$ must have zero arc length
measure.

Our proof of Theorem \ref{main}\ relies on the following subtle convergence
result for harmonic measures, which is of interest in its own right. In what
follows $\Omega $ denotes a domain in $\mathbb{R}^{N}$ ($N\geq 2$)
possessing a Green function $G_{\Omega }(\cdot ,\cdot )$. For any
(non-empty) open set $\omega $, any Borel set $A$ and any point $x$ in $%
\mathbb{R}^{N}$, we denote by $\mu _{x}^{\omega }(A)$ the harmonic measure
of $A\cap \partial \omega $ for $\omega $ evaluated at $x$. (If $x\not\in
\omega $, this measure is assigned the value $0$.)

\begin{theorem}
\label{tool}Let $\xi _{0}\in \Omega $ and $\omega $ be an open subset of $%
\Omega $. Suppose that $(v_{k})$ is a decreasing sequence of subharmonic
functions on $\omega $ such that $v_{1}/G_{\Omega }(\xi _{0},\cdot )$ is
bounded above and $\lim_{k\rightarrow \infty }v_{k}<0$ on $\omega $. If $\mu
_{x_{1}}^{\omega }(\partial \Omega )>0$ for some $x_{1}$, then $\mu
_{x_{1}}^{\{v_{k}<0\}}(\partial \Omega )>0$ for all sufficiently large $k$.
\end{theorem}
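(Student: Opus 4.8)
The plan is to compare the harmonic measure of $\partial \Omega$ seen from $x_{1}$ relative to the open sets $\omega_{k}:=\{v_{k}<0\}$ with the one relative to $\omega$, and to show the former increases to something positive.

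\textbf{Reductions and setup.} Write $G:=G_{\Omega }(\xi _{0},\cdot )$. Since $v_{1}\leq CG$ for some $C$ and $(v_{k})$ decreases, $v_{k}\leq CG$ for all $k$. As $\mu _{x_{1}}^{\omega }(\partial \Omega )>0$ we have $x_{1}\in \omega $, and we may replace $\omega $ by the component of $\omega$ containing $x_{1}$. Because $G_{\Omega }(\xi _{0},\cdot )$ and $G_{\Omega }(\xi _{0}^{\prime },\cdot )$ are comparable on $\Omega $ away from their poles, we may move the pole so that $\xi _{0}\notin \overline{\omega }$. Since $\{x:\mu _{x}^{\omega }(\partial \Omega )>0\}$ is open and $\lim_{k}v_{k}>-\infty $ quasi-everywhere, after perturbing $x_{1}$ we may assume $u(x_{1})>-\infty $, where $u:=\lim_{k}v_{k}$; this is harmless, since any fixed segment through $x_{1}$ lying in $\omega $ lies in $\omega _{k}$ for all large $k$ (as $u<0$), so $x_{1}$ and the perturbed point lie in the same component of $\omega _{k}$ eventually. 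Each $\omega _{k}$ is open ($v_{k}$ is u.s.c.), $\omega _{k}\uparrow \omega $ (as $u<0$ on $\omega $), and only the component $\omega _{k}^{\ast }$ of $\omega _{k}$ containing $x_{1}$ matters for $\mu _{x_{1}}^{\omega _{k}}$, with $\omega _{k}^{\ast }\uparrow \omega $.

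\textbf{A monotone family.} Put $p:=\mu _{\bullet }^{\omega }(\partial \Omega )$ and $p_{k}:=\mu _{\bullet }^{\omega _{k}^{\ast }}(\partial \Omega )$, bounded harmonic with values in $[0,1]$. A point of $\omega $ lying on $\partial \omega _{k}^{\ast }$ must satisfy $v_{k}\geq 0$ (else it would lie in $\omega _{k}^{\ast }$), so $\partial \omega _{k}^{\ast }\cap \omega \subseteq \{v_{k}\geq 0\}$; also $\partial \omega _{k}^{\ast }\cap \partial \Omega \subseteq \partial \omega _{k+1}^{\ast }\cap \partial \Omega \subseteq \partial \omega $. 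Applying the composition (``transitivity'') property of harmonic measure --- for $x\in \omega _{k}^{\ast }$, $\mu _{x}^{\omega }(A)=\int_{\partial \omega _{k}^{\ast }}\mu _{y}^{\omega }(A)\,d\mu _{x}^{\omega _{k}^{\ast }}(y)$ with $\mu _{y}^{\omega }:=\delta _{y}$ for $y\in \partial \omega $ --- one obtains, for $x\in \omega _{k}^{\ast }$,
\[
0\leq p_{k}(x)\leq p_{k+1}(x)\leq p(x),\qquad p(x)-p_{k}(x)=\int_{\partial \omega _{k}^{\ast }\cap \{v_{k}\geq 0\}}p(y)\,d\mu _{x}^{\omega _{k}^{\ast }}(y).
\]
Hence $p_{k}\uparrow q$ on $\omega $ for some harmonic $q\leq p$ (Harnack). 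It suffices to prove $q(x_{1})>0$: then $\mu _{x_{1}}^{\{v_{k}<0\}}(\partial \Omega )=p_{k}(x_{1})\uparrow q(x_{1})>0$, so this holds for all large $k$.

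\textbf{Reducing the goal.} Since $\{v_{k}\geq 0\}\subseteq \{v_{j}\geq 0\}$ for $k\geq j$, the identity above gives $p(x_{1})-p_{k}(x_{1})\leq \mu _{x_{1}}^{\omega _{k}^{\ast }}\big(\overline{\{v_{j}\geq 0\}}\big)$. As $\omega _{k}^{\ast }\uparrow \omega $ the exit position of $\omega _{k}^{\ast }$ converges to that of $\omega $, so by upper semicontinuity of the indicator of the closed set $\overline{\{v_{j}\geq 0\}}$ (Fatou), and keeping track of $p$ rather than of the bound $p\leq 1$, one gets $\limsup_{k}\big(p(x_{1})-p_{k}(x_{1})\big)\leq \mu _{x_{1}}^{\omega }\big(\partial \Omega \cap \overline{\{v_{j}\geq 0\}}\big)$; letting $j\to \infty $,
\[
0\leq p(x_{1})-q(x_{1})\leq \mu _{x_{1}}^{\omega }(K_{\infty }),\qquad K_{\infty }:=\partial \Omega \cap \bigcap_{k\geq 1}\overline{\{v_{k}\geq 0\}}.
\]
So everything reduces to showing $\mu _{x_{1}}^{\omega }(K_{\infty })<\mu _{x_{1}}^{\omega }(\partial \Omega )$ --- equivalently, that $\{v_{k}\geq 0\}$ cannot cluster on a set of full harmonic measure in $\partial \Omega $ for every $k$.

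\textbf{The crux.} This last step is where all three hypotheses on $(v_{k})$ are genuinely needed together, and it is the part I expect to be hardest. The mechanism I would pursue: (i) if $\zeta \in \partial \Omega $ is a regular boundary point with $\zeta \in \overline{\{v_{k}\geq 0\}}$, then $\sup_{B(\zeta ,r)\cap \omega }v_{k}\geq 0$ for every $r>0$, while $v_{k}\leq CG$ forces this supremum $\to 0$ as $r\to 0$; hence $\limsup_{y\to \zeta }v_{k}(y)=0$ at quasi-every $\zeta \in K_{\infty }$, for every $k$. (ii) Each $v_{k}$, being subharmonic and dominated by the Green potential $CG$ (which has fine boundary limit $0$ quasi-everywhere on $\partial \Omega $), has a finite fine/nontangential boundary limit $v_{k}^{\ast }\leq 0$ at $\mu _{x_{1}}^{\omega }$-a.e.\ point of $\partial \Omega $; combined with (i) this should pin $v_{k}^{\ast }=0$ at $\mu _{x_{1}}^{\omega }$-a.e.\ point of $K_{\infty }$. (iii) If $\mu _{x_{1}}^{\omega }(K_{\infty })=\mu _{x_{1}}^{\omega }(\partial \Omega )$, then for every $k$ the function $v_{k}$ has vanishing boundary limit on a $\mu _{x_{1}}^{\omega }$-full subset of $\partial \Omega $; comparing $v_{k}$ on $\omega _{k}^{\ast }$ against $CG$ and the harmonic function $p_{k}$ (both recoverable from their boundary data on $\partial \omega _{k}^{\ast }$) and letting $k\to \infty $ should contradict $u(x_{1})<0$. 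The persistent difficulty is that the sets $\{v_{k}\geq 0\}$ need not be polar and may carry genuine harmonic mass; only the conjunction ``$(v_{k})$ decreasing, $v_{k}\leq CG$ vanishing at $\partial \Omega $, and $\lim_{k}v_{k}<0$'' squeezes that mass off $\partial \Omega $ in the limit, and quantifying this squeezing --- converting the soft statement ``$\limsup v_{k}=0$ on $K_{\infty}$'' into control of $\mu _{x_{1}}^{\omega }(K_{\infty })$ --- is the real content of the theorem.
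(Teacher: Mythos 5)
Your approach is genuinely different from the paper's. You try to argue entirely within classical harmonic measure, comparing the increasing domains $\omega_k^*=\{v_k<0\}$ with $\omega$ via the transitivity (sweeping) identity and a $w^*$-limit argument. The paper instead moves everything to the Martin boundary: it writes $\mu_x^\omega(\partial\Omega)=\int_{\Delta_1}\{M(x,y)-R_{M(\cdot,y)}^{\Omega\setminus\omega}(x)\}\,d\mu_1(y)$, so that $\mu_{x_1}^\omega(\partial\Omega)>0$ is equivalent to $\Omega\setminus\omega$ being minimally thin at a set of $y\in\Delta_1$ of positive $\mu_1$-measure, and then a separate proposition (which is really where the work happens) shows that the minimal fine limit of $v_k/G_\Omega(\xi_0,\cdot)$ at such $y$ is eventually negative, hence $\Omega\setminus\{v_k<0\}$ is also minimally thin there. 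Your sweeping/monotonicity framework is a reasonable ``elementary'' attempt, but it does not close.

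The decisive problem is that you openly leave \emph{the crux} unproved. Having reduced matters to $\mu_{x_1}^\omega(K_\infty)<\mu_{x_1}^\omega(\partial\Omega)$, your plan (i)--(iii) is not a proof. Step (i) is sound (at regular $\zeta\in K_\infty$, $\limsup v_k=0$), but step (ii) requires a fine/Fatou theory for subharmonic functions that, in a general Greenian $\Omega$, is exactly the Martin-boundary machinery you are trying to avoid, and step (iii) is pure hope: there is no maximum-principle mechanism by which ``$\limsup v_k=0$ at $\mu_{x_1}^\omega$-a.e.\ point of $\partial\Omega$'' forces a lower bound for $v_k$ or $u$ at $x_1$ --- a subharmonic function can be $\leq 0$ on $\omega$, equal $-\infty$ at a point, and still have unrestricted $\limsup=0$ at every boundary point (e.g.\ $\epsilon\log|z|$ on $\mathbb{D}$). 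The monotone/decreasing hypothesis and the bound $v_1\le CG$ must be exploited jointly and quantitatively; that is precisely what the paper's Proposition achieves by a weak$^*$ argument at the Martin boundary (sweeping $v_k/G$ out to a probability measure $\nu$ carried by $\Omega$, not by $\Delta$, and then applying monotone convergence), and your sketch does not reproduce it.

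There is also a smaller flaw in the reduction itself. When you pass to the limit in $p(x_1)-p_k(x_1)\le\mu_{x_1}^{\omega_k^*}(\overline{\{v_j\ge 0\}})$, the resulting bound is $\mu_{x_1}^\omega$ of the closure of $\{v_j\ge 0\}$ intersected with $\partial\omega$, not with $\partial\Omega$. Since $\partial\omega=(\partial\omega\cap\Omega)\cup(\partial\omega\cap\partial\Omega)$ and $\mu_{x_1}^\omega$ can put mass on $\partial\omega\cap\Omega$, your target set $K_\infty$ should be $\partial\omega\cap\bigcap_k\overline{\{v_k\ge 0\}}$; you have silently dropped the $\partial\omega\cap\Omega$ part, which can carry genuine harmonic measure. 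Even if the crux were provable as you state it, you would need to control this additional mass as well.
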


The above result fails without the upper boundedness hypothesis on $%
v_{1}/G_{\Omega }(\xi _{0},\cdot )$, as can be seen from the following
examples (there are obvious analogues in higher dimensions):

\begin{enumerate}
\item[(a)] $\Omega =\omega =\mathbb{D}$ and $v_{k}(z)=1+k\log \left\vert
z\right\vert $, so $\{v_{k}<0\}=\{\left\vert z\right\vert <e^{-1/k}\}.$

\item[(b)] $\Omega =\omega =\mathbb{D}$ and $v_{k}(z)=1-k\dfrac{1-\left\vert
z\right\vert ^{2}}{\left\vert 1-z\right\vert ^{2}}$, so $\{v_{k}<0\}$ is a
disc internally tangent to $\mathbb{T}$ at $1.$
\end{enumerate}

A weaker version of this result, where $\Omega $ is a simply connected plane
domain and each function $v_{k}$ is harmonic on all of $\Omega $, was
established in \cite{G13}. We will use a substantially different argument to
prove this more general theorem. When $N=2$ the result is valid for domains
in the extended complex plane $\widehat{\mathbb{C}}=\mathbb{C\cup \{\infty \}%
}$. In the application of Theorem \ref{tool} to the proof of Theorem \ref%
{main} it is crucial that, in contrast to the above two examples, the
sequence $(v_{k})$ need only have a negative limit on a suitable open subset 
$\omega $ of $\Omega $, namely one for which $\mu _{x}^{\omega }(\partial
\Omega )>0$.

Theorem \ref{tool} and its proof are based on Chapter 6 of the second
author's doctoral thesis \cite{MM}.

\section{Proof of Theorem \protect\ref{main}}

Let $f$, $(S_{n_{k}})$, $E$ and $F$ be as in the statement of Theorem \ref%
{main}, and let 
\begin{equation*}
D=\{\zeta \in E\cap F:S(\zeta )\neq f(\zeta )\}.
\end{equation*}%
Also, let $\Gamma (1)$ denote the open triangular region with vertices $%
1,(1\pm i)/2$ (say), and let 
\begin{equation*}
\Gamma (\zeta )=\{\zeta z:z\in \Gamma (1)\}\ \ \ \ (\zeta \in \mathbb{T}).
\end{equation*}

Now suppose, for the sake of contradiction, that the conclusion of the
theorem fails. Then we may choose a positive number $a$ sufficiently large
to ensure that $\sigma (A_{a})>0$, where 
\begin{equation*}
A_{a}=\{\zeta \in D:\left\vert f\right\vert \leq a\text{ on }\Gamma (\zeta )%
\text{ and }\left\vert S_{n_{k}}(\zeta )\right\vert \leq a\text{ for all }%
k\}.
\end{equation*}%
It follows, on multiplication by a suitable unimodular constant, that we can
choose a compact set $K$ of $A_{a}$ such that $\inf_{K}\func{Re}(S-f)>0$ and 
$0<\sigma (K)<2\pi $. The domain $\Omega =\widehat{\mathbb{C}}\backslash K$
then possesses a Green function, by Myrberg's theorem (Theorem 5.3.8 of \cite%
{AG}), since $K$ is non-polar.

We put $\omega =\cup _{\zeta \in K}\Gamma (\zeta )$, and reduce $K$, if
necessary, to ensure that $\omega $ is a simply connected domain. Clearly $%
\left\vert f\right\vert \leq a$ on $\omega $. Since the triangles $\Gamma
(\zeta )$ are congruent, the boundary of $\omega $ is a rectifiable Jordan
curve. Thus $\mu _{z}^{\omega }(K)>0$ when $z\in \omega $ by the F. and M.
Riesz theorem (Theorem VI.1.2 of \cite{GM}), in view of the fact that $%
\sigma (K)>0$. Let $g:\mathbb{D}\rightarrow \omega $ be a conformal map. It
extends to a continuous bijection $g:\overline{\mathbb{D}}\rightarrow 
\overline{\omega }$, by Carath\'{e}odory's theorem. The function $g^{\prime
} $ belongs to the Hardy space $H^{1}$ by Theorem VI.1.1 of \cite{GM}, so
the F. and M. Riesz theorem shows further that, for almost every $\zeta \in
K $, the function $g$ is conformal at $g^{-1}(\zeta )$ and $\mathrm{nt}%
\lim_{w\rightarrow g^{-1}(\zeta )}(f\circ g)(w)=f(\zeta )$. Since $f\circ g$
is a bounded holomorphic function on $\mathbb{D}$, we know that $f\circ
g=H_{f\circ g}^{\mathbb{D}}$, using the usual notation for Dirichlet
solutions, whence 
\begin{equation}
f=H_{f\circ g}^{\mathbb{D}}\circ g^{-1}=H_{f}^{\omega }\text{ \ \ on \ \ }%
\omega .  \label{fHf}
\end{equation}

We define 
\begin{equation*}
u_{k}=\frac{1}{n_{k}}\log \frac{\left\vert S_{n_{k}}-f\right\vert }{2a}\text{
\ on \ }\mathbb{D}\text{ \ \ \ \ }(k\in \mathbb{N}).
\end{equation*}%
Noting from Bernstein's lemma (Theorem 5.5.7 of \cite{Ran}) that%
\begin{equation*}
\left\vert S_{n_{k}}\right\vert \leq ae^{n_{k}G_{\Omega }(\infty ,\cdot )}%
\text{ \ \ on \ }\Omega ,
\end{equation*}%
we see that $u_{k}\leq G_{\Omega }(\infty ,\cdot )$ on $\omega $. Now $\lim
\sup_{k\rightarrow \infty }u_{k}(z)\leq \log \left\vert z\right\vert $ on $%
\mathbb{D}$, so we can choose a sequence $(r_{k})$ in $[0,1)$\ such that $%
r_{k}\uparrow 1$ and 
\begin{equation*}
u_{j}(z)\leq \frac{1}{2}\log \left\vert z\right\vert \text{ \ \ \ }%
(\left\vert z\right\vert \leq r_{k},j\geq k).
\end{equation*}%
Let $v_{k}=H_{\psi _{k}}^{\omega }$, where 
\begin{equation*}
\psi _{k}(z)=\left\{ 
\begin{array}{cc}
\frac{1}{2}\log \left\vert z\right\vert & \text{ \ on \ }\partial \omega
\cap D(0,r_{k}) \\ 
G_{\Omega }(\infty ,z) & \text{ \ on \ }\partial \omega \cap (\mathbb{D}%
\backslash D(0,r_{k})) \\ 
0 & \text{ \ on \ }\partial \omega \cap \mathbb{T}%
\end{array}%
\right. .
\end{equation*}%
Then $u_{k}\leq v_{k}$ on $\omega $ and $(v_{k})$ is a decreasing sequence
of harmonic functions on $\omega $ with limit $\frac{1}{2}\log \left\vert
\cdot \right\vert $ on $\partial \omega $.

By Theorem \ref{tool}, and the fact that $\mu _{z}^{\omega }(K)>0$ when $%
z\in \omega $, there exists $k^{\prime }\in \mathbb{N}$ such that the open
set $\omega _{1}:=\omega \cap \{v_{k^{\prime }}<0\}$ is non-empty and 
\begin{equation}
\mu _{w}^{\omega _{1}}(\partial \Omega )>0\text{ \ for some }w\in \omega
_{1}.  \label{ne}
\end{equation}%
Clearly $u_{k}<0$ on $\omega _{1}$ for all $k\geq k^{\prime }$. Thus $%
\left\vert S_{n_{k}}-f\right\vert \leq 2a$, and so $\left\vert
S_{n_{k}}\right\vert \leq 3a$, on $\omega _{1}$ for all $k\geq k^{\prime }$.
Now $S_{n_{k}}=H_{S_{n_{k}}}^{\omega _{1}}$on $\omega _{1}$, so by dominated
convergence 
\begin{equation*}
f=H_{\phi }^{\omega _{1}}\text{ \ on \ }\omega _{1}\text{, \ \ where \ \ }%
\phi =\left\{ 
\begin{array}{cc}
f & \text{ \ on \ }\partial \omega _{1}\cap \mathbb{D} \\ 
S & \text{ \ on \ }\partial \omega _{1}\cap \mathbb{T}\subset K%
\end{array}%
\right. .
\end{equation*}%
However, we also know from (\ref{fHf}) that 
\begin{equation*}
f=H_{f}^{\omega }=H_{H_{f}^{\omega }}^{\omega _{1}}=H_{f}^{\omega _{1}}\text{
\ on \ }\omega _{1}
\end{equation*}%
(see Theorem 6.3.6\ of \cite{AG}). Thus, by (\ref{ne}) and our choice of $K$%
, we arrive at the contradiction that there is a point $w$ in $\omega _{1}$
satisfying%
\begin{equation*}
0=\func{Re}H_{\phi -f}^{\omega _{1}}(w)\geq \inf_{K}\func{Re}(S-f)\mu
_{w}^{\omega _{1}}(\partial \Omega )>0,
\end{equation*}%
Theorem \ref{main} is now established, subject to verification of Theorem %
\ref{tool}.

\section{Proof of Theorem \protect\ref{tool}}

We will employ some results concerning the Martin boundary and the minimal
fine topology, which are expounded in Chapters 8 and 9 of the book \cite{AG}%
. Let $\widehat{\Omega }=\Omega \cup \Delta $ denote the Martin
compactification of a Greenian domain $\Omega $ in $\mathbb{R}^{N}$, let $%
M(\cdot ,y)$ denote the Martin kernel with pole at $y\in \Delta $, and let $%
\Delta _{1}$ denote the set of minimal elements of $\Delta $.\ Thus 
\begin{equation*}
M(x,y)=\lim_{z\rightarrow y}\frac{G_{\Omega }(x,z)}{G_{\Omega }(x_{0},z)}%
\text{ \ \ \ }(x\in \Omega ,y\in \Delta ),
\end{equation*}%
where $x_{0}$ denotes the reference point for the compactification. A set $%
E\subset \Omega $ is said to be minimally thin at a point $y\in \Delta _{1}$
if $R_{M(\cdot ,y)}^{E}\neq M(\cdot ,y)$, where $R_{u}^{L}$ denotes the
usual reduction of a positive superharmonic function $u$ on $\Omega $
relative to a set $L\subset \Omega $. Further, a function $f$ is said to
have minimal fine limit $l$ at $y$ if there is a set $E$, minimally thin at $%
y$, such that $f(x)\rightarrow l$ as $x\rightarrow y$ in $\Omega \backslash
E $. Limit notions with respect to the minimal fine topology will be
prefixed by \textquotedblleft mf\textquotedblright . The main work lies in
establishing the following result, which develops ideas from \cite{GAIF}.

\begin{proposition}
\label{prop}Let $\xi _{0}\in \Omega ,y\in \Delta _{1}$ and $\omega $ be an
open subset of $\Omega $ such that $\Omega \backslash \omega $ is minimally
thin at $y$. Suppose that $(v_{k})$ is a decreasing sequence of subharmonic
functions on $\omega $ such that $v_{1}/G_{\Omega }(\xi _{0},\cdot )$ is
bounded above and $\lim_{k\rightarrow \infty }v_{k}<0$ on $\omega $. Then
there exists $k^{\prime }\in \mathbb{N}$ such that,%
\begin{equation*}
\mathrm{mf}\lim_{z\rightarrow y}\frac{v_{k}(z)}{G_{\Omega }(\xi _{0},z)}<0%
\text{ \ \ \ }(k\geq k^{\prime }).
\end{equation*}
\end{proposition}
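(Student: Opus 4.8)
The plan is to recast the statement in terms of the nonnegative superharmonic functions $w_{k}:=CG-v_{k}$ on $\omega$, where $G:=G_{\Omega}(\xi_{0},\cdot)$ and $C$ is a constant with $v_{1}\leq CG$ on $\omega$, furnished by the boundedness hypothesis. Since $(v_{k})$ decreases and each $v_{k}$ is upper semicontinuous, $v_{\infty}:=\lim_{k}v_{k}$ is subharmonic on $\omega$, so $-v_{\infty}$ is superharmonic there, and as $v_{\infty}<0$ it is a nonzero positive superharmonic function on $\omega$. Each $w_{k}$, being the difference of the superharmonic function $CG$ and the subharmonic function $v_{k}$, is superharmonic, with $w_{k}\geq CG-v_{1}\geq 0$, and $w_{k}\uparrow w_{\infty}:=CG-v_{\infty}=CG+(-v_{\infty})$. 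Since $v_{k}/G=C-w_{k}/G$, the conclusion is equivalent to the statement that $\gamma_{k}:=\mathrm{mf}\lim_{z\to y}w_{k}(z)/G(z)$ exceeds $C$ for all sufficiently large $k$.

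First I would note that the whole discussion can be carried out inside $\omega$. Because $\Omega\setminus\omega$ is minimally thin at $y$, standard results on minimal thinness (see \cite{AG}, Chs.\ 8--9) identify $y$ with a minimal Martin boundary point $\tilde{y}$ of the Greenian domain $\omega$ in such a way that the minimal fine filters agree on $\omega$, and $G=G_{\Omega}(\xi_{0},\cdot)$ is comparable on $\omega$, along this filter, with the Green function of $\omega$ (the ratio tending to a positive limit). In particular, for any nonnegative superharmonic function $s$ on $\omega$ the limit $\mathrm{mf}\lim_{z\to y}s(z)/G(z)$ exists in $[0,\infty]$ and can be evaluated from the Riesz--Martin representation of $s$ relative to the Green function and Martin kernel of $\omega$.

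Granting this, the argument has two parts. (i) $\gamma_{\infty}:=\mathrm{mf}\lim_{z\to y}w_{\infty}(z)/G(z)>C$. Since limits are additive, $\gamma_{\infty}=\mathrm{mf}\lim(CG)/G+\mathrm{mf}\lim(-v_{\infty})/G=C+\delta$ with $\delta:=\mathrm{mf}\lim_{z\to y}(-v_{\infty}(z))/G(z)$, and a Fatou-lemma argument applied to the Riesz--Martin representation of $-v_{\infty}$ (using $G_{\omega}(w,z)/G_{\omega}(\xi_{0},z)\to M_{\omega}(w,\tilde{y})>0$ and the positivity of the corresponding boundary kernels) shows that a nonzero positive superharmonic function has strictly positive minimal fine limit relative to $G$; hence $\delta>0$ and $\gamma_{\infty}>C$. (ii) $\gamma_{k}\uparrow\gamma_{\infty}$. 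Monotonicity and $\gamma_{k}\leq\gamma_{\infty}$ are immediate from $w_{k}\uparrow w_{\infty}$; for the equality $\lim_{k}\gamma_{k}=\gamma_{\infty}$ one uses a monotone convergence property of minimal fine limits: decomposing $w_{k}=G_{\omega}\mu_{k}+\int M_{\omega}(\cdot,\tilde{y}')\,d\nu_{k}(\tilde{y}')$, the harmonic-part measures increase to those of $w_{\infty}$, the limit $\gamma_{k}$ is (up to the comparison constant) an integral of nonnegative kernels against $\mu_{k}$ and $\nu_{k}$, and $w_{k}\uparrow w_{\infty}$ forces these integrals to converge up to their values for $w_{\infty}$. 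Combining (i) and (ii), $\gamma_{k}\uparrow\gamma_{\infty}>C$, so $\gamma_{k}>C$ for all large $k$, which is the proposition.

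I expect the main obstacle to be the monotone convergence in part (ii). The potential-part measures $\mu_{k}$ need not increase with $k$ (the increasing harmonic parts can absorb the growth), so one cannot argue termwise and must exploit $w_{k}\uparrow w_{\infty}$ to handle the potential and harmonic contributions together. This is also where all three hypotheses are genuinely used: $(v_{k})$ decreasing makes $(w_{k})$ increase, the bound $v_{1}\leq CG$ makes the $w_{k}$ bona fide nonnegative superharmonic functions whose behaviour near $y$ is controlled, and the minimal thinness of $\Omega\setminus\omega$ at $y$ lets one transfer everything to $\omega$; the examples (a), (b) confirm that the conclusion collapses once the boundedness hypothesis is dropped.
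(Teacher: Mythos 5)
Your step (ii) is a genuine gap, not just an obstacle to be polished away. Monotonicity of $(w_k)$ gives $\gamma_k\uparrow$ and $\gamma_k\le\gamma_\infty$, so $\lim_k\gamma_k\le\gamma_\infty$ is immediate --- but that is the wrong direction. What you need is $\lim_k\gamma_k>C$, which would follow from $\lim_k\gamma_k\ge\gamma_\infty$ together with (i), and nothing you cite supplies that reverse inequality. The quantity $\mathrm{mf}\lim_{z\to y}s(z)/G(z)$ at a fixed minimal point $y$ is read off the Riesz--Martin data of $s$, and, as you yourself observe, the Riesz--Martin measures of the $w_k$ need not increase with $k$, so there is no termwise monotone convergence available. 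The diagonal argument one might try (pick a single mf-approach sequence $z_n\to y$ good for every $w_k$, use $w_k(z_n)\uparrow w_\infty(z_n)$) does not close because the limits in $n$ and in $k$ cannot be interchanged without further input. I do not know a lemma that rescues (ii), and I doubt that the mf limit is continuous under increasing limits of superharmonic functions in the sense you require.

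The paper's proof is designed to sidestep exactly this interchange. Rather than tracking the $k$-dependent Riesz--Martin data of $w_k$, it constructs a \emph{single} probability measure $\nu$ on $\partial^{\widehat\Omega}\omega_0$ (a $w^{\ast}$-limit of normalized harmonic measures on an exhaustion of a shrunken $\omega_0$), proves $\nu(\{y\})=0$ using an auxiliary potential whose ratio with $G$ blows up along $\Omega\setminus\omega_0$ while remaining bounded off a minimally thin set, and then uses only the subharmonic mean-value inequality to obtain $\widetilde v_k(y)\le\int\widetilde v_k\,d\nu$ simultaneously for all $k$, with $\nu$ independent of $k$. Monotone convergence for the fixed measure $\nu$ then gives $\lim_k\widetilde v_k(y)\le\int\lim_k\widetilde v_k\,d\nu<0$, because $\nu$ is supported in $\omega$ where $\lim_k\widetilde v_k<0$. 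In your notation this is precisely the one-sided bound $\gamma_k\ge\int(w_k/G)\,d\nu$ with a $k$-independent $\nu$, which yields $\lim_k\gamma_k\ge\int(w_\infty/G)\,d\nu>C$ directly, without ever needing $\lim_k\gamma_k=\gamma_\infty$. Two smaller issues in your step (i): your hypotheses permit $v_\infty\equiv-\infty$ on a component, in which case $-v_\infty$ is not a superharmonic function and has no Riesz--Martin representation; and you need the mf limits to exist at the specific point $y$ rather than merely almost everywhere, which requires the one-sided-boundedness version of the Fatou-type theorem (Theorem 9.6.2(ii) of \cite{AG}) that the paper invokes at the outset.
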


\begin{proof}
Without loss of generality we may assume that $\xi _{0}$ coincides with the
reference point $x_{0}$ for the Martin compactification of $\Omega $, and
that $x_{0}\not\in \overline{\omega }$. For each $k\in \mathbb{N}$ we define 
\begin{equation*}
\widetilde{v}_{k}(z)=\frac{v_{k}(z)}{G_{\Omega }(x_{0},z)}\text{ \ \ \ }%
(z\in \omega ).
\end{equation*}%
By hypothesis there is a positive constant $c$ such that the function $%
cG_{\Omega }(x_{0},\cdot )-v_{1}$ is positive and superharmonic on $\omega $%
. Hence, by Theorem 9.6.2(ii) of \cite{AG}, each function $\widetilde{v}_{k}$
has a minimal fine limit in the range $[-\infty ,c)$ at $y$. We denote this
limit by $\widetilde{v}_{k}(y)$. Thus, for each $k$, there is a set $L_{k}$,
minimally thin at $y$, such that 
\begin{equation*}
\widetilde{v}_{k}(z)\rightarrow \widetilde{v}_{k}(y)\text{ \ \ \ }%
(z\rightarrow y\text{ in }\widehat{\Omega },\text{ }z\in \Omega \backslash
L_{k}).
\end{equation*}%
By Lemma 9.3.1 of \cite{AG} we can now choose a single set $F\subset \Omega $%
, minimally at $y$, such that%
\begin{equation}
\widetilde{v}_{k}(z)\rightarrow \widetilde{v}_{k}(y)\text{ \ \ \ }%
(z\rightarrow y\text{ in }\widehat{\Omega },\text{ }z\in \Omega \backslash F)%
\text{ \ \ for all }k.  \label{F}
\end{equation}

By Corollary 8.2.9 and Theorem 8.3.1 of \cite{AG} we can find an open
neighbourhood $U$ of $\Delta \backslash \{y\}$ in $\widehat{\Omega }$ such
that $U$ is minimally thin at $y$, and hence a closed subneighbourhood $L$
of $\Delta \backslash \{y\}$ with the same property. (A more detailed
explanation of this step may be found in Lemma 7.2.3 of \cite{MM}.) By
removing $L$ from $\omega $ we can ensure that the closure $\overline{\omega 
}^{\widehat{\Omega }}$ of $\omega $ in $\widehat{\Omega }$ meets $\Delta $
precisely at $y$. Next, by Lemma 9.2.2(iii) of \cite{AG}, we can find an
open neighbourhood of $\partial \omega \cap \Omega $ that is minimally thin
at $y$, and hence a subneighbourhood $F_{0}$ of $\partial \omega \cap \Omega 
$ that is closed relative to $\Omega $ and has the same property. We now
define the open set $\omega _{0}=\omega \backslash F_{0}$. Thus $\overline{%
\omega }_{0}\cap \Omega \subset \omega $, the set $\Omega \backslash \omega
_{0}$ is minimally thin at $y$, and $\overline{\omega }_{0}^{\widehat{\Omega 
}}\cap \Delta =\{y\}$. We are going to construct a probability measure $\nu $
on the boundary $\partial ^{\widehat{\Omega }}\omega _{0}$ of $\omega _{0}$
in $\widehat{\Omega }$ satisfying $\nu (\overline{\omega }_{0}^{\widehat{%
\Omega }}\cap \Omega )=1$, whence $\nu (\{y\})=0$, and also 
\begin{equation*}
\widetilde{v}_{k}(y)\leq \int_{\overline{\omega }_{0}^{\widehat{\Omega }}}%
\widetilde{v}_{k}(\zeta )d\nu (\zeta )\text{ \ \ \ }(k\in \mathbb{N}).
\end{equation*}

Let $(\Omega _{m})$ be an exhaustion of $\Omega $ by bounded open sets
satisfying $\overline{\Omega }_{m}\subset \Omega _{m+1}$ for all $m$, and
define $m(z)=\min \{m:z\in \Omega _{m}\}$ for $z\in \Omega $. For each $z\in
\omega _{0}$ we define a measure on $\partial (\Omega _{m(z)}\cap \omega
_{0})$\ by writing 
\begin{equation*}
d\mu _{z}^{\ast }(\zeta )=\frac{G_{\Omega }(x_{0},\zeta )}{G_{\Omega
}(x_{0},z)}d\mu _{z}^{\Omega _{m(z)}\cap \omega _{0}}(\zeta ).
\end{equation*}%
Since $G_{\Omega }(x_{0},\cdot )$ is harmonic on a neighbourhood of $%
\overline{\Omega _{m(z)}\cap \omega _{0}}$,%
\begin{equation*}
\mu _{z}^{\ast }(\overline{\omega }_{0}^{\widehat{\Omega }})=\frac{1}{%
G_{\Omega }(x_{0},z)}\int_{\partial (\Omega _{m(z)}\cap \omega
_{0})}G_{\Omega }(x_{0},\zeta )d\mu _{z}^{\Omega _{m(z)}\cap \omega
_{0}}(\zeta )=1.
\end{equation*}%
Later\ we will arrive at the desired measure $\nu $ as a $w^{\ast }$-limit
of a suitable sequence of measures $(\mu _{z_{n}}^{\ast })$.

As a first step we show that there is a potential $u$ on $\Omega $ and a set 
$E_{0}\subset \Omega $, minimally thin at $y$, such that 
\begin{equation}
\frac{u(z)}{G_{\Omega }(x_{0},z)}\rightarrow \infty \text{ \ \ \ }%
(z\rightarrow y,z\in \Omega \backslash \omega _{0})  \label{C1}
\end{equation}%
and%
\begin{equation}
\frac{u(z)}{G_{\Omega }(x_{0},z)}\rightarrow 1\text{ \ \ \ }(z\rightarrow
y,z\in \Omega \backslash E_{0}).  \label{C2}
\end{equation}%
To see this, we note from Theorem 9.2.7 of \cite{AG} that, since $\Omega
\backslash \omega _{0}$ is minimally thin at $y$, there is a potential $%
G_{\Omega }\mu $ such that 
\begin{equation*}
a:=\int_{\Omega }M(x,y)d\mu (x)<\infty
\end{equation*}%
and%
\begin{equation*}
\frac{G_{\Omega }\mu (z)}{G_{\Omega }(x_{0},z)}\rightarrow \infty \text{ \ \
\ }(z\rightarrow y,z\in \Omega \backslash \omega _{0}).
\end{equation*}%
Also, Fatou's lemma implies that 
\begin{equation*}
\underset{z\rightarrow y}{\lim \inf }\frac{G_{\Omega }\mu (z)}{G_{\Omega
}(x_{0},z)}\geq \int_{\Omega }\underset{z\rightarrow y}{\lim \inf }\frac{%
G_{\Omega }(x,z)}{G_{\Omega }(x_{0},z)}d\mu (x)=\int_{\Omega }M(x,y)d\mu
(x)=a,
\end{equation*}%
while the reverse inequality follows from the result cited above and the
fact that $\Omega $ is not minimally thin at $y$. Hence, by Theorem 9.3.3 of 
\cite{AG}, there is a set $E_{0}\subset \Omega $, minimally thin at $y$,
such that 
\begin{equation*}
\frac{G_{\Omega }\mu (z)}{G_{\Omega }(x_{0},z)}\rightarrow a\text{ \ \ \ }%
(z\rightarrow y,z\in \Omega \backslash E_{0}).
\end{equation*}%
We now obtain (\ref{C1}) and (\ref{C2}) by setting $u=a^{-1}G_{\Omega }\mu $.

Let $\varepsilon >0$. Using the above fact, we can find $r_{\varepsilon }>0$
such that 
\begin{equation*}
u(z)>\frac{G_{\Omega }(x_{0},z)}{\varepsilon }\text{ \ \ \ if \ }z\in
(\Omega \backslash \omega _{0})\cap B_{M}(y,r_{\varepsilon })
\end{equation*}%
and%
\begin{equation*}
u(z)<2G_{\Omega }(x_{0},z)\text{ \ \ \ if \ }z\in (\Omega \backslash
E_{0})\cap B_{M}(y,r_{\varepsilon }),
\end{equation*}%
where $B_{M}(y,r)$ denotes the open ball of centre $y$ and radius $r>0$ with
respect to some metric compatible with the Martin topology. Since $\overline{%
\Omega _{m(z)}\cap \omega _{0}}\subset \Omega $ and $u$ is positive and
superharmonic on $\Omega $, we deduce that, for each $z\in (\omega
_{0}\backslash E_{0})\cap B_{M}(y,r_{\varepsilon })$,%
\begin{eqnarray*}
\mu _{z}^{\ast }(\partial ^{\widehat{\Omega }}\omega _{0}\cap
B_{M}(y,r_{\varepsilon })) &=&\frac{1}{G_{\Omega }(x_{0},z)}\int_{\partial ^{%
\widehat{\Omega }}\omega _{0}\cap B_{M}(y,r_{\varepsilon })}G_{\Omega
}(x_{0},\zeta )~d\mu _{z}^{\Omega _{m(z)}\cap \omega _{0}}(\zeta ) \\
&\leq &\frac{1}{G_{\Omega }(x_{0},z)}\int_{\partial (\Omega _{m(z)}\cap
\omega _{0})}\varepsilon u(\zeta )~d\mu _{z}^{\Omega _{m(z)}\cap \omega
_{0}}(\zeta ) \\
&\leq &\frac{\varepsilon u(z)}{G_{\Omega }(x_{0},z)}\leq 2\varepsilon .
\end{eqnarray*}

Since $E_{0}\cup F$ and $\Omega \backslash \omega _{0}$ are both minimally
thin at $y$, we can choose a sequence $(z_{n})$ in $\omega _{0}\backslash
(E_{0}\cup F)$ such that $z_{n}\rightarrow y$. Thus, recalling (\ref{F}), we
see that%
\begin{equation}
\widetilde{v}_{k}(z_{n})\rightarrow \widetilde{v}_{k}(y)\text{ \ \ \ }%
(n\rightarrow \infty )  \label{seq}
\end{equation}%
and%
\begin{equation}
\mu _{z_{n}}^{\ast }(\partial ^{\widehat{\Omega }}\omega _{0}\cap
B_{M}(y,r_{\varepsilon }))\leq 2\varepsilon \text{ \ for all sufficiently
large }n.  \label{2e}
\end{equation}%
Further, since $(\mu _{z_{n}}^{\ast })$ is a sequence of probability
measures on the compact set $\overline{\omega }_{0}^{\widehat{\Omega }}$,
there is a subsequence $(\mu _{z_{n_{j}}}^{\ast })$ which is $w^{\ast }$%
-convergent to some measure $\nu $. Since every upper bounded upper
semicontinuous function $\phi $ on $\overline{\omega }_{0}^{\widehat{\Omega }%
}$ is the pointwise limit of a decreasing sequence of continuous functions,
the monotone convergence theorem yields%
\begin{equation}
\underset{j\rightarrow \infty }{\lim \sup }\int_{\overline{\omega }_{0}^{%
\widehat{\Omega }}}\phi ~d\mu _{z_{n_{j}}}^{\ast }\leq \int_{\overline{%
\omega }_{0}^{\widehat{\Omega }}}\phi ~d\nu .  \label{usc}
\end{equation}%
Clearly $\nu $ is a probability measure with support in $\partial ^{\widehat{%
\Omega }}\omega _{0}$. Also, for any $\varepsilon >0$, there exists $%
r_{\varepsilon }>0$ such that, by (\ref{2e}),%
\begin{equation*}
\nu (\{y\})\leq \nu (\partial ^{\widehat{\Omega }}\omega _{0}\cap
B_{M}(y,r_{\varepsilon }))\leq 2\varepsilon ,
\end{equation*}%
so $\nu (\{y\})=0$. Since $\overline{\omega }_{0}^{\widehat{\Omega }}\cap
\Delta =\{y\}$, we conclude that $\nu (\partial ^{\widehat{\Omega }}\omega
_{0}\cap \Omega )=1$.

The subharmonicity of $v_{k}$ on $\omega $ implies that%
\begin{eqnarray}
\widetilde{v}_{k}(z_{n_{j}}) &=&\frac{v_{k}(z_{n_{j}})}{G_{\Omega
}(x_{0},z_{n_{j}})}  \notag \\
&\leq &\frac{1}{G_{\Omega }(x_{0},z_{n_{j}})}\int_{\partial (\Omega
_{m(z_{n_{j}})}\cap \omega _{0})}v_{k}(\zeta )~d\mu _{n_{j}}^{\Omega
_{m(z_{n_{j}})}\cap \omega _{0}}(\zeta )  \notag \\
&=&\int_{\overline{\omega }_{0}^{\widehat{\Omega }}}\widetilde{v}_{k}(\zeta
)~d\mu _{z_{n_{j}}}^{\ast }(\zeta ).  \label{smv}
\end{eqnarray}%
Also, the functions $\widetilde{v}_{k}$ are upper semicontinuous on $\omega $
and bounded above (by $c$) on $\overline{\omega }_{0}^{\widehat{\Omega }}$.
Hence, defining $\phi =\widetilde{v}_{k}$ on $\overline{\omega }_{0}^{%
\widehat{\Omega }}\cap \Omega $ and $\phi =c$ at $y$, we see from (\ref{usc}%
) that%
\begin{equation*}
\underset{j\rightarrow \infty }{\lim \sup }\int_{\overline{\omega }_{0}^{%
\widehat{\Omega }}}\widetilde{v}_{k}(\zeta )~d\mu _{z_{n_{j}}}^{\ast }(\zeta
)\leq \int_{\overline{\omega }_{0}^{\widehat{\Omega }}}\widetilde{v}%
_{k}(\zeta )~d\nu (\zeta ).
\end{equation*}%
From (\ref{seq}) and (\ref{smv}) we conclude that 
\begin{equation*}
\widetilde{v}_{k}(y)\leq \int_{\overline{\omega }_{0}^{\widehat{\Omega }}}%
\widetilde{v}_{k}(\zeta )~d\nu (\zeta )\text{ \ \ \ }(k\in \mathbb{N}).
\end{equation*}%
Finally, $(\widetilde{v}_{k})$ is a decreasing sequence of upper bounded
functions on $\overline{\omega }_{0}^{\widehat{\Omega }}$, so we can apply
the monotone convergence theorem to conclude that 
\begin{equation*}
\lim_{k\rightarrow \infty }\widetilde{v}_{k}(y)\leq \int_{\overline{\omega }%
_{0}^{\widehat{\Omega }}}\lim_{k\rightarrow \infty }\widetilde{v}_{k}(\zeta
)d\nu (\zeta ).
\end{equation*}%
Since $\nu (\overline{\omega }_{0}^{\widehat{\Omega }}\cap \Delta )=0$ and $%
\lim_{k\rightarrow \infty }\widetilde{v}_{k}<0$ on $\omega $, we conclude
that $\lim_{k\rightarrow \infty }\widetilde{v}_{k}(y)<0$. Thus $\widetilde{v}%
_{k}(y)<0$ for all sufficiently large $k$, as required.
\end{proof}

\bigskip

\bigskip

\begin{proof}[Proof of Theorem \protect\ref{tool}]
Without loss of generality we may assume that $\omega $ is connected. There
is a (unique) probability measure $\mu _{1}$ on $\Delta _{1}$ such that 
\begin{equation*}
1=\int_{\Delta _{1}}M(x,y)~d\mu _{1}(y)\text{ \ \ \ }(x\in \Omega ).
\end{equation*}%
Hence%
\begin{equation}
\mu _{x}^{\omega }(\Omega )=R_{1}^{\Omega \backslash \omega
}(x)=\int_{\Delta _{1}}R_{M(\cdot ,y)}^{\Omega \backslash \omega }(x)~d\mu
_{1}(y)\text{ \ \ \ }(x\in \omega ),  \label{inter}
\end{equation}%
by Theorem 6.9.1 and Corollary 9.1.4 of \cite{AG}. Thus%
\begin{equation*}
\mu _{x}^{\omega }(\partial \Omega )=1-\mu _{x}^{\omega }(\Omega
)=\int_{A}\left\{ M(x,y)-R_{M(\cdot ,y)}^{\Omega \backslash \omega
}(x)\right\} d\mu _{1}(y)\text{ \ \ \ }(x\in \omega ),
\end{equation*}%
where 
\begin{equation*}
A=\{y\in \Delta _{1}:R_{M(\cdot ,y)}^{\Omega \backslash \omega }\neq M(\cdot
,y)\};
\end{equation*}%
that is, $A$ is the set of points in $\Delta _{1}$ at which $\Omega
\backslash \omega $ is minimally thin. Our hypothesis that $\mu
_{x_{1}}^{\omega }(\partial \Omega )>0$ shows that $\mu _{1}(A)>0$.

Let 
\begin{equation*}
A_{k}=\{y\in A:R_{M(\cdot ,y)}^{\Omega \backslash \{v_{k}<0\}}\neq M(\cdot
,y)\}\text{ \ \ \ }(k\in \mathbb{N}).
\end{equation*}%
Proposition \ref{prop} tells us that, if $y\in A$, then $\Omega \backslash
\{v_{k}<0\}$ is minimally thin at $y$ for all sufficiently large $k$. Hence $%
(A_{k})$ increases to $A$, and so we can choose $k^{\prime }$ such that $\mu
_{1}(A_{k^{\prime }})>0$. On each connected component of the open set $%
\{v_{k^{\prime }}<0\}$ either $R_{M(\cdot ,y)}^{\Omega \backslash
\{v_{k^{\prime }}<0\}}=M(\cdot ,y)$ or $R_{M(\cdot ,y)}^{\Omega \backslash
\{v_{k^{\prime }}<0\}}<M(\cdot ,y)$. Thus we can choose a component $\omega
^{\prime }$ of $\{v_{k^{\prime }}<0\}$ on which $R_{M(\cdot ,y)}^{\Omega
\backslash \{v_{k^{\prime }}<0\}}<M(\cdot ,y)$ for all $y$ in a subset of $%
A_{k^{\prime }}$ of positive $\mu _{1}$-measure. Further, we can arrange
that $x_{1}\in \omega ^{\prime }$ by choosing $k^{\prime }$ large enough.
The preceding calculation, applied to $\{v_{k^{\prime }}<0\}$ and $%
A_{k^{\prime }}$ in place of $\omega $ and $A$, now shows that%
\begin{equation*}
\mu _{x}^{\{v_{k^{\prime }}<0\}}(\partial \Omega )=\int_{A_{k^{\prime
}}}\left\{ M(x,y)-R_{M(\cdot ,y)}^{\Omega \backslash \{v_{k^{\prime
}}<0\}}(x)\right\} d\mu _{1}(y)>0\text{ \ \ \ }(x\in \omega ^{\prime }),
\end{equation*}%
as required.
\end{proof}

\bigskip

\bigskip

\bigskip

\noindent School of Mathematical Sciences,

\noindent University College Dublin,

\noindent Belfield, Dublin 4, Ireland.

\noindent e-mail: stephen.gardiner@ucd.ie

\bigskip

\noindent Department of Mathematics,

\noindent University of Western Ontario,

\noindent London, Ontario, Canada N6A 5B7.

\noindent e-mail: arhimidis8@yahoo.gr

\end{document}